\newtheorem{lema}{Lemma}
\newtheorem{teor}{Theorem}
\newtheorem{prop}{Proposition}
\newtheorem{corol}{Corollary}
\theoremstyle{definition}
\newtheorem{Nota}{Remark}
\newtheorem{exam}{Example}
\begin{document}
\thispagestyle{plain}
\par\bigskip
\begin{centering}

\textbf{Homological Ideals as Integer Specializations of Some Brauer Configuration Algebras}

\end{centering}\par\bigskip
\begin{centering}
\footnotesize{Pedro Fernando Fern\'andez Espinosa }\\
\footnotesize{Agust\'{\i}n Moreno Ca\~{n}adas}\\
\end{centering}
\par\bigskip
\small{In this paper homological ideals associated to some Nakayama algebras are characterized and enumerated via integer specializations of some suitable Brauer configuration algebras. Besides, it is shown how the number of such homological ideals can be connected with the categorification process of Fibonacci numbers defined by Ringel and Fahr. }
\par\bigskip
\small{\textit{Keywords and phrases}: Brauer configuration algebra,  categorification, homological ideal, integer specialization.}

\bigskip \small{Mathematics Subject Classification 2010 : 16G20; 16G60; 16G30.}

\section{Introduction}

Homological ideals or strong idempotent ideals are ideals of an algebra introduced by  Auslander, Platzeck and Todorov in \cite{Auslander}. These ideals arise from the research of  heredity ideals and quasi-hereditary algebras. For these ideals the corresponding quotient map induces a full and faithful functor between derived categories. Recently, homological ideals have been studied in different contexts, for instance Gatica, Lanzilotta and Platzeck and independently Xu and Xi established some relationships with the so called finitistic dimension conjecture and the Igusa-Todorov functions \cite{Gatica}. Furthermore, De la Pe\~na and Xi  in \cite{JAP} and Armenta in \cite{Armenta} studied the impact of these ideals in the context of Hochschild cohomology and one point extensions.
\par\bigskip
This work deals with the combinatorial properties of homological ideals associated to some path algebras and their relationships with the novel Brauer Configuration algebras which have been introduced recently by Green and Schroll in \cite{Green}. In particular, we introduce the notion of the message of a Brauer configuration, such messages enable to compute the number of homological ideals associated to some Nakayama algebras. Moreover, such number of ideals allow us to obtain an alternative version of the partition formula  for even-index Fibonacci numbers given by Ringel and Fahr in \cite{Fahr1} attaining in this way a new algebraic interpretation for these numbers. Worth noting that Ringel and Fahr devoted works \cite{Fahr1}, \cite{Fahr2}, and \cite{Fahr3} to this kind of interpretations also called categorifications. \par\bigskip

This paper is distributed as follows. In Section 2, we recall main notation and definitions regarding homological ideals and Brauer configuration algebras. In particular, we introduce the notion of integer specialization of a Brauer configuration and the concept of the message of a Brauer configuration. In Section 3, we give combinatorial conditions to determine whether an idempotent ideal associated to some Nakayama algebras is homological or not and it is reminded the notion of categorification in the sense of Ringel and Fahr. We also give the number of such ideals via the integer specialization of a suitable Brauer configuration algebra and its corresponding message. Moreover, we use the number of homological ideals to establish a partition formula for even-index Fibonacci numbers. Some interesting sequences in the On-line Encyclopedia of Integer Sequences (OEIS, \cite{OEIS}) arising from these computations are described as well.

\section{Preliminaries}
In this section, we recall main definitions and notation to be used throughout the paper \cite{Green, Auslander, Sierra, JAP,Armenta}.

\subsection{Homological Ideals}

For an algebra $A$ we mean a finite dimensional basic and connected algebra over an algebraically closed field $k$. We denote the category of finite dimensional right $A$-modules as $mod(A)$, and the bounded derived category of $mod(A)$ as $D^b(A)$. We will assume that $A$ is a bounded path algebra of the form $kQ/{I}$ with $Q$ a finite quiver and $I$ an admissible ideal.

\par\bigskip
An epimorphism of algebras $\phi:A \to B$ is called an \textit{homological epimorphism} if it induces a full and faithful functor
\[D^b(\phi^*) : D^b(B) \to D^b(A).\]

Let $I$ be a two sided ideal of $A$. Since the quotient map $\pi:A \to A/I$ is an epimorphism then the induced functor $\pi^*:mod(A/I)\to mod(A)$ is full and faithful.
\par\bigskip

A two sided ideal $I$ of $A$ is \textit{homological} if the quotient map $\pi:A\to A/I$ is an homological epimorphism.

\par\bigskip

The following results characterize homological ideals \cites{Auslander, JAP}.

\begin{prop}\label{JAP} Let $I$ be an ideal of $A$, then
\begin{enumerate}
\item $I$ is an homological ideal of $A$ if and only if $\mathrm{Tor}_n^A(I,A/I)=0$ for all $n\geq0$. In this case, $I$ is idempotent.

\item If $I$ is idempotent and $A$-projective, then $I$ is homological.

\item If $I$ is idempotent then $I$ is homological if and only if $\mathrm{Ext}^{n}_{A}(I,A/I)=0$ for all $n\geq0$.
\end{enumerate}

\end{prop}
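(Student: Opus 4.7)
The plan is to base all three items on the short exact sequence of $A$-bimodules
\[
0 \to I \to A \to A/I \to 0,
\]
together with the standard characterization of a homological epimorphism $\pi\colon A\to A/I$: it is equivalent to the vanishing $\mathrm{Tor}_n^A(A/I,A/I)=0$ for $n\geq 1$ (the multiplication map $A/I\otimes_A A/I\to A/I$ being already an isomorphism because $(A/I)\cdot I=0$), and dually to $\mathrm{Ext}_A^n(A/I,A/I)=0$ for $n\geq 1$. I will treat these derived-category criteria as known from the cited references.

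For part (1), apply $-\otimes_A A/I$ to the bimodule sequence. Since $\mathrm{Tor}_n^A(A,A/I)=0$ for $n\geq 1$, the long exact sequence gives isomorphisms $\mathrm{Tor}_{n+1}^A(A/I,A/I)\cong\mathrm{Tor}_n^A(I,A/I)$ for $n\geq 1$, together with a four-term tail identifying $\mathrm{Tor}_1^A(A/I,A/I)$ with $I\otimes_A A/I=I/I^2$ (the induced map $I/I^2\to A/I$ being zero because $I\subset A$). So the Tor characterization of a homological epimorphism translates exactly into $\mathrm{Tor}_n^A(I,A/I)=0$ for all $n\geq 0$; the degree-zero piece $I/I^2=0$ is precisely the asserted idempotence of $I$.

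Part (2) is then immediate: projectivity of $I$ kills $\mathrm{Tor}_n^A(I,-)$ for $n\geq 1$, idempotence handles the $n=0$ case, and (1) applies.

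For part (3) I would run the same d\'evissage with $\mathrm{Hom}_A(-,A/I)$. The long exact sequence, using $\mathrm{Ext}_A^n(A,A/I)=0$ for $n\geq 1$, yields $\mathrm{Ext}_A^n(I,A/I)\cong\mathrm{Ext}_A^{n+1}(A/I,A/I)$ for $n\geq 1$ and a tail identifying $\mathrm{Hom}_A(I,A/I)$ with $\mathrm{Ext}_A^1(A/I,A/I)$. Assuming $I=I^2$, the group $\mathrm{Hom}_A(I,A/I)$ vanishes automatically, since any $A$-linear $f\colon I\to A/I$ satisfies $f(I)=f(I^2)\subseteq f(I)\cdot I\subseteq(A/I)\cdot I=0$. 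Hence the Ext criterion translates into $\mathrm{Ext}_A^n(I,A/I)=0$ for all $n\geq 0$. The step I expect to be the principal obstacle is verifying that, in the finite-dimensional basic setting of the paper, the Tor- and Ext-formulations of the homological epimorphism condition really are equivalent; I would either quote this directly from \cite{JAP} or derive it from the bimodule sequence together with the standard $k$-duality $D=\mathrm{Hom}_k(-,k)$, after which the chain of reductions above closes the equivalence.
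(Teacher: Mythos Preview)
The paper does not actually prove Proposition~\ref{JAP}; it is stated as a quotation from \cite{Auslander} and \cite{JAP} with no argument given. So there is nothing to compare your proof against in the paper itself.

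That said, your argument is the standard and correct one. The d\'evissage along $0\to I\to A\to A/I\to 0$ together with the Geigle--Lenzing/Tor and Ext characterizations of a homological epimorphism does exactly what is needed: it identifies $\mathrm{Tor}_{n+1}^A(A/I,A/I)$ with $\mathrm{Tor}_n^A(I,A/I)$ (with the $n=0$ piece being $I/I^2$), and dually $\mathrm{Ext}_A^{n+1}(A/I,A/I)$ with $\mathrm{Ext}_A^n(I,A/I)$. Your observation that $\mathrm{Hom}_A(I,A/I)=0$ once $I=I^2$ is correct for right $A$-module maps since $f(ab)=f(a)b\in(A/I)I=0$ for $a,b\in I$. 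The only external input you need, as you note, is that the Tor and Ext formulations of ``homological epimorphism'' are each equivalent to full faithfulness of $D^b(\pi^*)$; both are standard and appear in the references the paper already cites, so invoking them is entirely in the spirit of the paper.
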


We denote the \textit{trace} of an $A$-module $M$ in an $A$-module $N$ as
$$tr_M(N):={\displaystyle\sum_{f\in Hom_{A}(M,N)}} Im(f) \subset N.$$

\addtocounter{Nota}{1}
\begin{Nota}
We recall that according to Auslander et al. \cite{Auslander}, if $P$ is an $A$-projective module then $tr_P(A)$ is an idempotent ideal of $A$ and one obtains all the idempotent ideals of $A$ this way.  
\end{Nota}

\begin{Nota}
Note that, since the homological ideals are idempotent ideals and the idempotent ideals are traces of projective modules over $A$ then there is always a finite number of homological ideals.
\end{Nota}

Following the assumption that $A$ is a bounded quiver algebra of the form $kQ/I$ and the number of vertices of $Q$ are finite for every subset $\{a_1,...,a_m \} \subset Q_0$, we will assume the following notation for every idempotent ideal generated by the  trace of $P(a_1) \oplus \cdots \oplus P(a_m)$ in $A$:
\begin{equation}\label{HI1}
\begin{split}
I_{a_1,...,a_m} = tr_{\Big(P(a_1) \oplus \cdots \oplus P(a_m)\Big) }(A).
\end{split}
\end{equation}

In this paper, we combine  tools developed by Auslander et al. in \cite{Auslander}, Xi and De la Pe\~na in \cite{JAP} and the integer specializations of some Brauer configuration (see Section \ref{MBC}) to establish an explicit formula for the number of homological ideals associated to some Nakayama algebras. This number allows to establish a partition formula for even-index Fibonacci numbers as Ringel and Fahr define in \cite{Fahr1, Fahr2, Fahr3}.

\subsection{Brauer Configuration Algebras}

Brauer configuration algebras were introduced by Green and Schroll in \cite{Green} as a generalization of Brauer graph algebras which are biserial algebras of tame representation type and whose representation theory is encoded by some combinatorial data based on graphs. Actually, underlying every Brauer graph algebra is a finite graph with acyclic orientation of the edges at every vertex and a multiplicity function \cite{Green}.  The construction of a Brauer graph algebra is a special case of the construction of a Brauer configuration algebra in the sense that every  Brauer graph is a Brauer configuration with the restriction that every polygon is a set with two vertices. In the sequel, we give precise definitions of a Brauer configuration and a Brauer configuration algebra.\par\bigskip

A \textit{Brauer configuration} $\Gamma$ is a quadruple of the form $\Gamma=(\Gamma_{0},\Gamma_{1},\mu,\mathcal{O})$ where:

\begin{enumerate}
\item[(B1)] $\Gamma_{0}$ is a finite set whose elements are called \textit{vertices},
\item [(B2)]  $\Gamma_{1}$ is a finite collection of multisets called \textit{polygons}. In this case, if $V\in \Gamma_{1}$ then the elements of $V$ are vertices possibly with repetitions,  $\mathrm{occ}(\alpha,V)$ denotes the frequency of the vertex $\alpha$ in the polygon $V$ and the \textit{valency} of $\alpha$ denoted  $val(\alpha)$ is defined  in such a way that: \begin{equation}
\begin{split}
val(\alpha)&=\underset{V\in\Gamma_{1}}{\sum}\mathrm{occ}(\alpha,V).
\end{split}
\end{equation}
\item [(B3)]$\mu$ is an integer valued function such that $\mu:\Gamma_{0}\rightarrow \mathbb{N}$ where $\mathbb{N}$ denotes the set of positive integers, it is called the \textit{multiplicity function},
\item[(B4)] $\mathcal{O}$ denotes an orientation defined on $\Gamma_{1}$ which is a choice, for each vertex $\alpha \in \Gamma_0$, of a cyclic ordering of the polygons in which $\alpha$ occurs as a vertex, including repetitions, we denote $S_{\alpha}$ such collection of polygons.  More specifically, if $S_{\alpha}=\{V^{(\alpha_{1})}_{1},V^{(\alpha_{2})}_{2},\dots, V^{(\alpha_t)}_{t}\}$ is the collection of polygons where the vertex $\alpha$ occurs with $\alpha_{i}=\mathrm{occ}(\alpha,V_{i})$ and $V^{(\alpha_{i})}_{i}$ meaning that $S_{\alpha}$ has $\alpha_{i}$ copies of $V_{i}$ then an orientation $\mathcal{O}$ is obtained by endowing a linear order $\leq$ to $S_{\alpha}$  and adding a relation $V_{t}\leq V_{1}$, if $V_{1}=\mathrm{min}\hspace{0.1cm}S_{\alpha}$ and $V_{t}=\mathrm{max}\hspace{0.1cm}S_{\alpha}$,
\item [(B5)] Every vertex in $\Gamma_{0}$ is a vertex in at least one polygon in $\Gamma_{1}$,
\item[(B6)] Every polygon has at least two vertices,
\item[(B7)] Every polygon in $\Gamma_{1}$ has at least one vertex $\alpha$ such that $val(\alpha)\mu(\alpha)>1$.
\end{enumerate}

The set $(S_{\alpha},\leq)$ is called the \textit{successor sequence} at the vertex $\alpha$.\par\bigskip

A vertex $\alpha\in\Gamma_{0}$ is said to be \textit{truncated} if $val(\alpha)\mu(\alpha)=1$, that is, $\alpha$ is truncated if it occurs exactly once in exactly one $V\in\Gamma_{1}$ and $\mu(\alpha)=1$. A vertex is \textit{non-truncated} if it is not truncated.

\begin{center}
\textbf{The Quiver of a Brauer Configuration Algebra}
\end{center}

The quiver $Q_{\Gamma}=((Q_{\Gamma})_{0},(Q_{\Gamma})_{1})$ of a Brauer configuration algebra is defined in such a way that the vertex set $(Q_{\Gamma})_{0}=\{v_{1},v_{2},\dots,v_{m}\}$ of $Q_{\Gamma}$ is in correspondence with the set of polygons $\{V_{1},V_{2},\dots,V_{m}\}$ in $\Gamma_{1}$, noting that there is one vertex in $(Q_{\Gamma})_{0}$ for every polygon in $\Gamma_{1}$.\par\bigskip

Arrows in $Q_{\Gamma}$ are defined by the successor sequences. That is, there is an arrow $v_{i}\stackrel{s_{i}}{\longrightarrow}v_{i+1}\in (Q_{\Gamma})_{1}$ provided that $V_{i}\leq V_{i+1}$ in $(S_{\alpha},\leq)\cup\{V_{t}\leq V_{1}\}$ for some non-truncated vertex $\alpha\in\Gamma_{0}$. In other words, for each non-truncated vertex $\alpha\in\Gamma_{0}$ and each successor $V'$ of $V$ at $\alpha$, there is an arrow from $v$ to $v'$ in $Q_{\Gamma}$ where $v$ and $v'$ are the vertices in $Q_{\Gamma}$ associated to the polygons $V$ and $V'$ in $\Gamma_{1}$, respectively.\par\bigskip

\begin{centering}
\textbf{The Ideal of Relations and Definition of a Brauer Configuration Algebra}\par\bigskip
\end{centering}

Fix a polygon $V\in\Gamma_{1}$ and suppose that $\mathrm{occ}(\alpha,V)=t\geq1$ then there are $t$ indices
$i_{1},\dots, i_{t}$ such that $V=V_{i_{j}}$. Then the \textit{special $\alpha$-cycles} at $v$ are the cycles $C_{i_{1}}, C_{i_{2}},\dots, C_{i_{t}}$ where $v$ is the vertex in the quiver of $Q_{\Gamma}$ associated to the polygon $V$.
If $\alpha$ occurs only once in $V$ and $\mu(\alpha)=1$ then there is only one special $\alpha$-cycle at $v$.

\par\smallskip

Let $k$ be a field and $\Gamma$ a Brauer configuration. The \textit{Brauer configuration algebra associated to $\Gamma$} is defined to be the bounded path algebra $\Lambda_{\Gamma}=kQ_{\Gamma}/I_{\Gamma}$, where $Q_{\Gamma}$ is the quiver associated to $\Gamma$ and $I_{\Gamma}$ is the \textit{ideal} in $kQ_{\Gamma}$ generated by the following set of relations $\rho_{\Gamma}$ of type I, II and III.\par\bigskip
\begin{enumerate}
\item \textbf{Relations of type I}. For each polygon $V=\{\alpha_{1},\dots, \alpha_{m}\}\in \Gamma_{1}$ and each pair of non-truncated vertices $\alpha_{i}$ and $\alpha_{j}$ in $V,$ the set of relations $\rho_{\Gamma}$ contains all relations of the form $C^{\mu(\alpha_{i})}-C'^{\mu(\alpha_{j})}$ where $C$ is a special $\alpha_{i}$-cycle
and $C'$ is a special $\alpha_{j}$-cycle.

\item \textbf{Relations of type II}. Relations of type II are all paths of the form $C^{\mu(\alpha)}a$ where $C$ is a special $\alpha$-cycle and $a$ is the first arrow in $C$.

\item \textbf{Relations of type III}. These relations are quadratic monomial relations of the form $ab$ in $kQ_{\Gamma}$ where $ab$ is not a subpath of any special cycle unless $a=b$ and $a$ is a loop associated to a vertex of valency 1 and $\mu(\alpha)>1$.

\end{enumerate}

As an example for $n \geq 4$ fixed, we consider a Brauer configuration $\Gamma_n=(\Gamma_{0},\Gamma_{1},\mu,\mathcal{O})$ such that:
\begin{enumerate}\label{BrauerH}
\item $\Gamma_{0}=\{n-k-1\in\mathbb{N}\mid 2 \leq k \leq n-1\}\cup\{n-2\}$,
\item $\Gamma_{1}=\{U_k=\{n-2,n-k-1\}\mid 2\leq k\leq n-1\}$.
\item The orientation $\mathcal{O}$ is defined in such a way that
\begin{enumerate}[$(a)$]
\item  Vertex $n-2$ has associated the successor sequence  $U_2<U_3<\cdots<U_{n-1}$, in this case, $val(n-2)=n-2$,
\item If $2\leq k\leq n-1$ then at vertex $n-k-1$, it holds that the corresponding successor sequence consists only of $U_k$, and for each $k$, $val(n-k-1)=1$.
\end{enumerate}
\item $\mu(n-2)=1$,
\item $\mu(n-k-1)=n-2$,\quad $2\leq k\leq n-1$.

\end{enumerate}

The ideal $I_{\Gamma_{n}}$ of the corresponding Brauer configuration algebra $\Lambda_{\Gamma_n}$ is generated by the following relations (see Figure \ref{examplebca}), for which it is assumed the following notation for the special cycles:
\begin{equation}\label{special}
\begin{split}
C^{U_k}_{n-2}&=\begin{cases}
a_1^{n-2}a_{2}^{n-2}\cdots a_{k-1}^{n-2}, &\quad\text{if } k=2,\\
a_{k-1}^{n-2}a_{k}^{n-2}\cdots a_{k-2}^{n-2},&\quad\text{otherwise}, \\
\end{cases}\\
C^{U_k}_{n-k-1}&=a_{1}^{n-k-1}.\\
\end{split}
\end{equation}

\begin{enumerate}
%Tipo 3
\item $a^{h}_{i}a^{s}_{r}$, if $h\neq s$, for all possible values of $i$ and $r$ unless for the loops associated to the vertices $n-k-1$,

% Tipo 1
\item $C^{U_k}_{n-2}-\left(C^{U_k}_{n-k-1}\right)^{n-2}$,\quad for all possible values of $k$,

%%% Tipo 2
\item $C^{U_k}_{n-2} a$  with $a$  being the first arrow of $C^{U_k}_{n-2}$  for all $k$,
\item $\left(C^{U_k}_{n-k-1}\right)^{n-2}a'$ with $a'$  being the first arrow of $C^{U_k}_{n-k-1}$ for all $k$.

\end{enumerate}
\par\bigskip
Figure \ref{examplebca} shows the quiver $Q_{\Gamma_n}$ associated to this configuration.

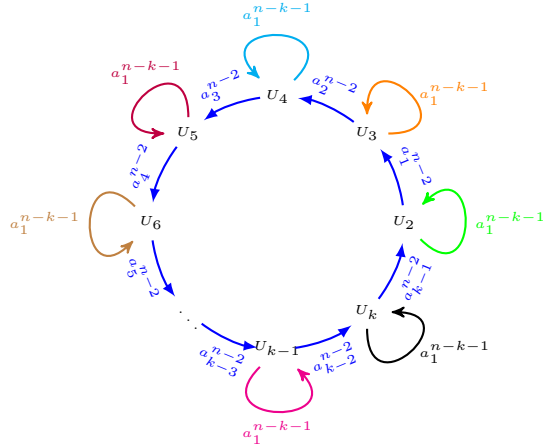
\begin{figure}[H]
\begin{center}
\begin{tikzpicture}[->,>=stealth',shorten >=1pt,thick,scale=0.56]
\def \radius {3cm}
\def \margin {8} % margin in angles, depends on the radius
{
\node at ({360/8 * (1 - 1)}:\radius)(U2) {\tiny{$U_2$}};

\node at ({360/8 * (2 - 1)}:\radius)(U3) {\tiny{$U_3$}};

\node at ({360/8 * (3 - 1)}:\radius) (U4){\tiny{$U_4$}};

\node at ({360/8 * (4 - 1)}:\radius)(U5) {\tiny{$U_5$}};

\node at ({360/8 * (5 - 1)}:\radius)(U6) {\tiny{$U_6$}};

\node at ({360/8 * (6 - 1)}:\radius)(U7) {\tiny{$\ddots$}};

\node at ({360/8 * (7 - 1)}:\radius)(Uk-1) {\tiny{$U_{k-1}$}};

\node at ({360/8 * (8 - 1)}:\radius)(Uk) {\tiny{$U_k$}};

%Arrows in the circle

\draw[->, >=latex,blue] ({360/8* (1 - 1)+\margin}:\radius)
arc ({360/8 * (1 - 1)+\margin}:{360/8 * (1)-\margin}:\radius)node[midway,sloped,above] {\tiny$a_1^{n-2}$};

\draw[->, >=latex,blue] ({360/8* (2 - 1)+\margin}:\radius)
arc ({360/8 * (2 - 1)+\margin}:{360/8 * (2)-\margin}:\radius)node[midway,sloped,above] {\tiny$a_2^{n-2}$};

\draw[->, >=latex,blue] ({360/8* (3 - 1)+\margin}:\radius)
arc ({360/8 * (3 - 1)+\margin}:{360/8 * (3)-\margin}:\radius)node[midway,sloped,above] {\tiny$a_3^{n-2}$};

\draw[->, >=latex,blue] ({360/8* (4 - 1)+\margin}:\radius)
arc ({360/8 * (4 - 1)+\margin}:{360/8 * (4)-\margin}:\radius)node[midway,sloped,above] {\tiny$a_4^{n-2}$};

\draw[->, >=latex,blue] ({360/8* (5 - 1)+\margin}:\radius)
arc ({360/8 * (5 - 1)+\margin}:{360/8 * (5)-\margin}:\radius)node[midway,sloped,below] {\tiny$a_5^{n-2}$};

\draw[->, >=latex,blue] ({360/8* (6 - 1)+\margin}:\radius)
arc ({360/8 * (6 - 1)+\margin}:{360/8 * (6)-\margin}:\radius)node[midway,sloped,below] {\tiny$a_{k-3}^{n-2}$};

\draw[->, >=latex,blue] ({360/8* (7 - 1)+\margin}:\radius)
arc ({360/8 * (7 - 1)+\margin}:{360/8 * (7)-\margin}:\radius)node[midway,sloped,below] {\tiny$a_{k-2}^{n-2}$};

\draw[->, >=latex,blue] ({360/8* (8 - 1)+\margin}:\radius)
arc ({360/8 * (8 - 1)+\margin}:{360/8 * (8)-\margin}:\radius)node[midway,sloped,below] {\tiny$a_{k-1}^{n-2}$};
}

%loops
\Loop[dist=2cm,dir=EA,style={green},label={\tiny$a_1^{n-k-1}$},labelstyle=right](U2)
\Loop[dist=2cm,dir=NOEA,style={orange},label={\tiny$a_1^{n-k-1}$},labelstyle=right](U3)
\Loop[dist=2cm,dir=NO,style={cyan},label={\tiny$a_1^{n-k-1}$},labelstyle=above](U4)
\Loop[dist=2cm,dir=NOWE,style={purple},label={\tiny$a_1^{n-k-1}$},labelstyle=above](U5)
\Loop[dist=2cm,dir=WE,style={brown},label={\tiny$a_1^{n-k-1}$},labelstyle=left](U6)
\Loop[dist=2cm,dir=SO,style={magenta},label={\tiny$a_1^{n-k-1}$},labelstyle=below](Uk-1)
\Loop[dist=2cm,dir=SOEA,style={black},label={\tiny$a_1^{n-k-1}$},labelstyle=right](Uk)
\end{tikzpicture}
\caption{The quiver $Q_{\Gamma_{n}}$ defined by the Brauer configuration $\Gamma_{n}$.}
\label{examplebca}
\end{center}
\end{figure}

The following results describe the structure of a Brauer configuration algebra \cite{Green}.

\addtocounter{teor}{3}
\begin{teor}\label{multiserial}
\textit{Let $\Lambda$ be a Brauer configuration algebra with Brauer configuration $\Gamma$}.
\begin{enumerate}
\item \textit{There is a bijective correspondence between the set of projective indecomposable $\Lambda$-modules and the polygons in $\Gamma$}.
\item \textit{If $P$ is a projective indecomposable $\Lambda$-module corresponding to a polygon $V$ in $\Gamma$. Then $\mathrm{rad}\hspace{0.1cm}P$ is a sum of $r$ indecomposable uniserial modules, where $r$ is the number
of (non-truncated) vertices of $V$ and where the intersection of any two of the uniserial modules is a simple $\Lambda$-module}.
\item \textit{A Brauer configuration algebra is a multiserial algebra}.
\item \textit{The number of summands in the heart of an indecomposable projective $\Lambda$-module $P$ such that $\mathrm{rad}^{2}\hspace{0.1cm}P\neq 0$ equals the number of non-truncated vertices of the polygons in $\Gamma$ corresponding to $P$ counting repetitions}.
\item \textit{If $\Lambda'$ is a Brauer configuration algebra obtained from $\Lambda$ by removing a truncated vertex of a polygon in $\Gamma_{1}$ with $d\geq 3$ vertices then $\Lambda$ is isomorphic to $\Lambda'$.}

\end{enumerate}

\end{teor}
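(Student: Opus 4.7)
The plan is to establish each of the five assertions by translating the combinatorics of $\Gamma$ into module-theoretic properties of $\Lambda_{\Gamma} = kQ_{\Gamma}/I_{\Gamma}$, using throughout that this is a bounded path algebra with admissible ideal.

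For part (1), the statement is essentially by construction: for any bounded path algebra $kQ/I$ with $I$ admissible, the indecomposable projective modules are in bijection with the vertices of $Q$ via $v \mapsto e_v \Lambda$. Since $(Q_{\Gamma})_0$ is put in correspondence with $\Gamma_1$ in the definition of $Q_{\Gamma}$, this gives (1) immediately.

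For parts (2), (3) and (4) I would fix $V \in \Gamma_1$ with corresponding quiver vertex $v$ and analyze $\mathrm{rad}\,P(v) = e_v \cdot \mathrm{rad}\,\Lambda_{\Gamma}$. A basis of $P(v)$ is given by (equivalence classes of) paths starting at $v$. The arrows leaving $v$ are in bijection with the occurrences of non-truncated vertices in $V$: each non-truncated $\alpha \in V$ with $\mathrm{occ}(\alpha,V)=t$ contributes the $t$ ``first arrows'' of the $t$ special $\alpha$-cycles at $v$, and truncated vertices contribute nothing by the arrow rule. Calling these arrows $a_1,\dots,a_r$, with $r$ the number of non-truncated vertices of $V$ counted with multiplicity, each $U_i := a_i\Lambda_{\Gamma}$ is uniserial: the type III relations force any continuation of $a_i$ to be a subpath of the corresponding special cycle $C_i$, while the type II relations cut the chain off exactly at $C_i^{\mu(\alpha_i)}$, so the submodules of $U_i$ form the totally ordered chain of initial subpaths of $C_i^{\mu(\alpha_i)}$. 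Summing gives $\mathrm{rad}\,P(v)=U_1+\cdots+U_r$. The type I relations $C^{\mu(\alpha_i)} = C'^{\mu(\alpha_j)}$ identify the socle of each $U_i$ with a common simple module $S$, and the type II relations ensure $S\cdot\mathrm{rad}\,\Lambda_{\Gamma}=0$, so any two of the $U_i$ meet in $S$. This proves (2); assertion (3) is then the definition of a multiserial algebra applied to (2); and (4) is obtained by quotienting out the common socle, since under the hypothesis $\mathrm{rad}^2 P(v)\neq 0$ each $U_i/S$ is nonzero and so the heart has exactly $r$ indecomposable summands.

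For part (5), I would observe that a truncated vertex $\alpha$ of $V$ contributes no arrow to $Q_{\Gamma}$ (only non-truncated vertices do, by the arrow rule), no type I relation (those pair up non-truncated vertices), and no type II relation (which uses a special cycle at a non-truncated vertex). The hypothesis $|V|\geq 3$ guarantees that after removing $\alpha$ the polygon still has at least two vertices, so axiom (B6) persists, and axiom (B7) is unaffected since the triggering vertex $\beta \in V$ with $val(\beta)\mu(\beta)>1$ must be non-truncated and hence different from $\alpha$. Consequently $Q_{\Gamma'} = Q_{\Gamma}$ and $I_{\Gamma'} = I_{\Gamma}$ literally as subsets of $kQ_{\Gamma}$, yielding the asserted isomorphism. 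The main obstacle is part (2): one must combine the combinatorics of successor sequences with the three types of relations to produce an explicit basis of $P(v)$ compatible with the claimed decomposition. A careful bookkeeping lemma listing the paths out of $v$ modulo $I_{\Gamma}$ as initial subpaths of the special cycles makes (2) precise; once this is done, (3)--(4) fall out and (5) becomes a matter of reading the definitions.
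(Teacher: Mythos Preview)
The paper does not contain a proof of this theorem: it is quoted from Green and Schroll \cite{Green} as background material in the preliminaries, and is stated there without argument. Hence there is no ``paper's own proof'' to compare against. Your outline is a reasonable sketch of how the result is actually established in the original reference---the analysis of $\mathrm{rad}\,P(v)$ via initial subpaths of special cycles, with the three relation types enforcing uniseriality and the common simple socle, is precisely the mechanism Green and Schroll use---so if the intent was to reconstruct their argument you are on the right track, but within the present paper the theorem is simply cited.
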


\addtocounter{prop}{3}
\begin{prop}\label{dimension}
\textit{Let $\Lambda$ be a Brauer configuration algebra associated to the Brauer configuration $\Lambda$ and let $\mathcal{C}=\{C_{1},\dots, C_{t}\}$ be a full set of equivalence class representatives of special cycles. Assume that for $i=1,\dots,t$, $C_{i}$ is a special $\alpha_{i}$-cycle where $\alpha_{i}$ is a non-truncated vertex in $\Gamma$}. \textit{Then}
\begin{center}
$\mathrm{dim}_{k}\hspace{0.1cm}\Lambda=2|Q_{0}|+\underset{C_{i}\in\mathcal{C}}{\sum}|C_{i}|(n_{i}|C_{i}|-1)$,

\end{center}
\textit{where $|Q_{0}|$ denotes the number of vertices of $Q$, $|C_{i}|$ denotes the number of arrows in the $\alpha_{i}$-cycle $C_{i}$ and $n_{i}=\mu(\alpha_{i})$}.
\end{prop}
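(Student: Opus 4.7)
The plan is to exhibit an explicit $k$-basis of $\Lambda_{\Gamma}=kQ_{\Gamma}/I_{\Gamma}$ consisting of equivalence classes of paths in $Q_{\Gamma}$ modulo $I_{\Gamma}$, and then to count them. Relations of type III annihilate every path that is not a subpath of some power $C^{\mu(\alpha)}$ of a special $\alpha$-cycle; relations of type II forbid extending $C^{\mu(\alpha)}$ any further; and relations of type I identify the maximal powers $C^{\mu(\alpha)}$ attached to different non-truncated vertices of a common polygon $V$. These observations pin down which nonzero paths survive, and the delicate point will be to verify that no further hidden identifications occur.

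First I would sort the surviving paths into three families. The trivial idempotents $\{e_{v}\mid v\in Q_{0}\}$ contribute $|Q_{0}|$ basis vectors. Next, at every vertex $v$ associated with a polygon $V$, the maximal powers $C^{\mu(\alpha)}$ (one for each non-truncated slot of $V$) are all identified by type I and produce a single common socle element of $P_{v}$, giving another $|Q_{0}|$ basis vectors overall. For the remaining nontrivial, non-maximal paths, the construction of $Q_{\Gamma}$ shows that every arrow belongs to exactly one equivalence class of special cycles $C_{i}$; hence a nonzero path of length $\ell$ with $1\leq\ell\leq n_{i}|C_{i}|-1$ is uniquely determined by its starting arrow (one of the $|C_{i}|$ arrows of $C_{i}$) together with $\ell$, yielding $|C_{i}|(n_{i}|C_{i}|-1)$ distinct basis paths per class, and contributions from different classes are pairwise disjoint. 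Summing,
\[
\mathrm{dim}_{k}\,\Lambda_{\Gamma} \;=\; |Q_{0}| + |Q_{0}| + \sum_{C_{i}\in\mathcal{C}}|C_{i}|(n_{i}|C_{i}|-1) \;=\; 2|Q_{0}| + \sum_{C_{i}\in\mathcal{C}}|C_{i}|(n_{i}|C_{i}|-1).
\]

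The main obstacle is proving that the candidate basis is actually linearly independent, i.e., that the prescribed relations of types I, II, and III generate all identifications and that the three families really are disjoint. The cleanest way to certify this is to compute $\mathrm{dim}_{k}\Lambda_{\Gamma}=\sum_{v\in Q_{0}}\mathrm{dim}_{k}P_{v}$ via Theorem \ref{multiserial}(2): for a polygon $V_{v}$ with $r_{v}$ non-truncated occurrences, the multiserial decomposition of $P_{v}$ yields
\[
\mathrm{dim}_{k}\,P_{v} \;=\; 2 - r_{v} + \sum_{\alpha\text{ nt in }V_{v}}\mathrm{occ}(\alpha,V_{v})\,val(\alpha)\,\mu(\alpha),
\]
where the $2$ accounts for the top $S_{v}$ together with the common socle and $-r_{v}$ compensates the $r_{v}$ identified copies of the socle. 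Summing over $v$ and using $\sum_{v}\mathrm{occ}(\alpha,V_{v})=val(\alpha)$ to switch to a sum indexed by non-truncated vertices gives $2|Q_{0}|+\sum_{\alpha\text{ nt}}val(\alpha)\bigl(val(\alpha)\mu(\alpha)-1\bigr)$, which after reindexing by the equivalence classes $C_{i}\in\mathcal{C}$ (using $|C_{i}|=val(\alpha_{i})$ and $n_{i}=\mu(\alpha_{i})$) matches the claimed formula.
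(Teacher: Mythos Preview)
The paper does not supply its own proof of this proposition: it is quoted verbatim from Green and Schroll \cite{Green} as part of the background material, with no argument given. There is therefore nothing in the present paper to compare your attempt against.

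That said, your proposal is sound and is essentially the argument one finds in the original source. The decomposition into trivial idempotents, socle elements, and proper nonzero subpaths of powers of special cycles is the right one, and your count of the third family is correct because every arrow of $Q_{\Gamma}$ lies in a unique equivalence class of special cycles. Your fallback verification via Theorem~\ref{multiserial}(2) is also correct: the identity $\dim_{k}P_{v}=2-r_{v}+\sum_{\alpha}\mathrm{occ}(\alpha,V_{v})\,val(\alpha)\mu(\alpha)$ follows from the uniserial decomposition of $\mathrm{rad}\,P_{v}$ (each uniserial summand for an occurrence of a non-truncated $\alpha$ has length $val(\alpha)\mu(\alpha)$, and the $r_{v}$ summands share a common simple socle), and the double-counting switch $\sum_{v}\mathrm{occ}(\alpha,V_{v})=val(\alpha)$ together with the bijection between non-truncated vertices and equivalence classes in $\mathcal{C}$ gives the stated formula. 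One minor caveat: your phrase ``giving another $|Q_{0}|$ basis vectors'' for the socle elements tacitly uses condition (B7), which guarantees every polygon has at least one non-truncated vertex and hence a nonzero socle cycle; you might make this explicit.
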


\begin{prop}\label{grading}
\textit{Let $\Lambda$ be the Brauer configuration algebra associated to a connected Brauer configuration $\Gamma$. The algebra $\Lambda$ has a length grading induced from the path algebra $kQ$ if and only if there is an $N\in\mathbb{Z}_{>0}$ such that for each non-truncated vertex $\alpha\in\Gamma_{0}$ $val(\alpha)\mu(\alpha)=N$.}
\end{prop}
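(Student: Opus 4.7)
The plan is to reduce the statement to the homogeneity of the generators of $I_\Gamma$ with respect to the length grading of $kQ_\Gamma$ (the grading in which every arrow has degree $1$). Since $\Lambda_\Gamma$ inherits a length grading from $kQ_\Gamma$ if and only if $I_\Gamma$ is generated by homogeneous elements, and since the relations of types II and III listed in the definition of $I_\Gamma$ are monomials in $kQ_\Gamma$ and therefore automatically homogeneous, the entire content of the proposition concerns the binomial relations of type I.

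I would then compute explicitly the two lengths involved in such a binomial. For a polygon $V\in\Gamma_1$ containing non-truncated vertices $\alpha_i,\alpha_j$, the associated type I relation is $C^{\mu(\alpha_i)}-C'^{\mu(\alpha_j)}$, where $C$ is a special $\alpha_i$-cycle at $V$ and $C'$ a special $\alpha_j$-cycle at $V$. By the way arrows of $Q_\Gamma$ are built from the successor sequences, a special $\alpha$-cycle contains exactly one arrow for each occurrence of $\alpha$ as a vertex of some polygon, so it has length $val(\alpha)$. Consequently $C^{\mu(\alpha_i)}$ has length $val(\alpha_i)\mu(\alpha_i)$ and $C'^{\mu(\alpha_j)}$ has length $val(\alpha_j)\mu(\alpha_j)$, and the binomial is homogeneous precisely when these two numbers coincide.

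For the $(\Leftarrow)$ direction, if $val(\alpha)\mu(\alpha)=N$ for every non-truncated $\alpha$ then every type I relation becomes a difference of two monomials of common length $N$, so $I_\Gamma$ admits a homogeneous generating set and $\Lambda_\Gamma$ inherits the length grading. For the $(\Rightarrow)$ direction, assume $I_\Gamma$ is a homogeneous ideal. Then every element of $I_\Gamma$ decomposes into homogeneous components each lying in $I_\Gamma$; applied to $C^{\mu(\alpha_i)}-C'^{\mu(\alpha_j)}$, if the two monomials had distinct lengths then each would individually belong to $I_\Gamma$ and hence vanish in $\Lambda_\Gamma$. Theorem \ref{multiserial} rules this out by identifying $C^{\mu(\alpha)}$ with a non-zero socle element of the indecomposable projective attached to $V$. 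Hence $val(\alpha_i)\mu(\alpha_i)=val(\alpha_j)\mu(\alpha_j)$ for every pair of non-truncated vertices in a common polygon, and connectedness of $\Gamma$, which lets us pass between any two polygons through a chain consecutively sharing a non-truncated vertex, propagates this common value to a single positive integer $N$ across the entire configuration.

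The main technical obstacle I foresee is the non-vanishing of $C^{\mu(\alpha)}$ in $\Lambda_\Gamma$, since the type II relations already force the closely related path $C^{\mu(\alpha)}a$ to be zero, so one has to argue carefully that the special cycle itself is not annihilated by the remaining generators of $I_\Gamma$. This should be read off from the explicit description of the radical and heart of the indecomposable projectives provided by Theorem \ref{multiserial}; once that non-vanishing is secured, the rest of the argument is a routine bookkeeping of path lengths together with a propagation along the connectedness of $\Gamma$.
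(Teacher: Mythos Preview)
The paper does not prove this proposition; it is quoted without proof from Green and Schroll \cite{Green} as part of the background material, so there is no in-paper argument to compare against.

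Your argument is correct and is the standard one. The reduction to homogeneity of the generating set of $I_\Gamma$, the observation that relations of types II and III are monomials and hence automatically homogeneous, and the computation that a special $\alpha$-cycle has length $val(\alpha)$ so that $C^{\mu(\alpha)}$ has length $val(\alpha)\mu(\alpha)$, are all exactly right. For the forward implication, your use of the fact that in a homogeneous ideal each homogeneous component of an element already lies in the ideal, together with the non-vanishing of $C^{\mu(\alpha)}$ in $\Lambda_\Gamma$, is sound; the latter is indeed what Theorem~\ref{multiserial} (more precisely, the explicit structure of indecomposable projectives in \cite{Green}) supplies, since $C^{\mu(\alpha)}$ spans the simple socle of the projective attached to $V$. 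The connectedness step also goes through: any vertex lying in two distinct polygons has valency at least $2$, hence is non-truncated, so consecutive polygons in a connecting chain always share a non-truncated vertex and the common value of $val(\alpha)\mu(\alpha)$ propagates along the chain.
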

Sierra \cite{Sierra} proved the following result regarding the center of a Brauer configuration algebra.

\addtocounter{teor}{2}
\begin{teor}\label{Serra}
\textit{Let $\Gamma$ be a reduced (i.e, without truncated vertices) and connected Brauer configuration and let $Q$ be its induced quiver and let $\Lambda$ be the induced Brauer configuration algebra such that $\mathrm{rad}^{2}\hspace{0.1cm}\Lambda \neq 0$ then the dimension of the center of $\Lambda$ denoted $\mathrm{dim}_{k}\hspace{0.1cm}Z(\Lambda)$ is given by the formula}:

\begin{equation}\label{Sierra}
\begin{split}
\mathrm{dim}_{k}\hspace{0.1cm}Z(\Lambda)&=1+\underset{\alpha\in\Gamma_{0}}{\sum}\mu(\alpha)+|\Gamma_{1}|-|\Gamma_{0}|+\#(Loops\hspace{0.1cm} Q)-|\mathscr{C}_{\Gamma}|.
\end{split}
\end{equation}

\textit{where} $|\mathscr{C}_{\Gamma}|=\{\alpha\in\Gamma_{0}\mid val(\alpha)=1, \hspace{0.1cm}and\hspace{0.1cm} \mu(\alpha)>1\}$.

\end{teor}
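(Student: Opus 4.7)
My plan is to use the fact that the Brauer configuration algebra $\Lambda$ is symmetric, so that $Z(\Lambda)\cong \Lambda/[\Lambda,\Lambda]$ as $k$-vector spaces, and then to compute $\dim_k \Lambda/[\Lambda,\Lambda]$ combinatorially. For a bounded path algebra $\Lambda=kQ_{\Gamma}/I_{\Gamma}$, a standard argument produces a $k$-basis of $\Lambda/[\Lambda,\Lambda]$ indexed by cyclic equivalence classes of basis paths of $\Lambda$, where two paths are cyclically equivalent if they differ by a cyclic rotation modulo $I_{\Gamma}$.

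From the explicit description of $\Lambda$ via polygons, special cycles, and the relations of types I, II and III, a $k$-basis of $\Lambda$ consists of the trivial paths $e_v$ (one for each $v\in(Q_\Gamma)_0$, which is in bijection with $\Gamma_1$), the proper powers $C^k$ of each special $\alpha$-cycle $C$ with $1\le k\le \mu(\alpha)-1$, and the common top element of each polygon, identified across its non-truncated vertices by the relations of type~I. Grouping these into cyclic classes yields the individual terms of the announced formula: the class of $\sum_v e_v$ accounts for the leading $1$; for each non-truncated vertex $\alpha$, the cyclic rotations of its special cycles across all polygons containing $\alpha$ furnish $\mu(\alpha)$ new classes, totalling $\sum_{\alpha\in\Gamma_0}\mu(\alpha)$; the relations of type~I and the remaining polygon/vertex incidences contribute the Euler-like correction $|\Gamma_1|-|\Gamma_0|$; the loops of $Q_\Gamma$ contribute $\#(\mathrm{Loops}\,Q)$; and the subtraction $-|\mathscr{C}_\Gamma|$ arises because, for each $\alpha$ with $val(\alpha)=1$ and $\mu(\alpha)>1$, the loop-generated class at $\alpha$ coincides with a rotation class already counted and must be removed to avoid double-counting.

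The main obstacle I anticipate is precisely this combinatorial bookkeeping. Concretely, the proof must (i) verify that the special cycles at the various polygons containing a fixed non-truncated vertex $\alpha$ merge into a single cyclic class; (ii) determine how the relations of type~I identify top-level classes across polygons; (iii) describe which loops produce new cyclic classes and which coincide with existing rotation classes (the latter being indexed by $\mathscr{C}_\Gamma$); and (iv) check linear independence of the resulting list of representatives in $\Lambda/[\Lambda,\Lambda]$. Once these four steps are completed, adding the separate contributions produces the claimed formula.
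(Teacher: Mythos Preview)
The paper does not actually prove this theorem: it is quoted verbatim from Sierra's article \cite{Sierra} and no argument is given. So there is no ``paper's own proof'' to compare against; your proposal stands on its own as an attempted proof of a cited result.

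Your overall strategy is sound and is in fact the natural one: Brauer configuration algebras are symmetric (this is in Green--Schroll), and for a symmetric algebra the symmetrizing form yields a $k$-linear isomorphism $Z(\Lambda)\cong \Lambda/[\Lambda,\Lambda]$, so it suffices to count cyclic-equivalence classes of nonzero closed paths. That said, what you have written is an outline rather than a proof, and two points need tightening before it becomes one. First, your description of a $k$-basis of $\Lambda$ is incomplete: the basis consists of \emph{all} nonzero proper subpaths of the paths $C^{\mu(\alpha)}$ (one copy for each special cycle, with the maximal ones identified across a polygon by the type~I relations), not merely the full powers $C^k$; the non-closed subpaths die in $\Lambda/[\Lambda,\Lambda]$, but the closed ones that are not full powers (arising when a vertex $\alpha$ occurs several times in the same polygon) must still be accounted for, and this is exactly where the loop term $\#(\mathrm{Loops}\,Q)$ and the correction $-|\mathscr{C}_\Gamma|$ come from. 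Second, the four bookkeeping steps (i)--(iv) that you list as ``obstacles'' are the entire content of the argument; until they are actually carried out and the contributions matched term-by-term to $1$, $\sum_\alpha\mu(\alpha)$, $|\Gamma_1|-|\Gamma_0|$, $\#(\mathrm{Loops}\,Q)$ and $-|\mathscr{C}_\Gamma|$, the proof is not complete. In particular, the assertion that the type~I identifications and polygon/vertex incidences produce exactly the Euler-type term $|\Gamma_1|-|\Gamma_0|$ requires a careful double count that you have not yet supplied.
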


As an example the following is the numerology associated to the algebra $\Lambda_{\Gamma_{n}}=kQ_{\Gamma_n}/I_{\Gamma_n}$ with $Q_{\Gamma_n}$ as shown in Figure \ref{examplebca} and special cycles given in (\ref{special}), ($|r(Q_{\Gamma_n})|$ is the number of indecomposable projective modules. Note that, $|C_{i}|=val(i)$):

\begin{equation*}\label{theexample}
\begin{split}
|r(Q_{\Gamma_n})|&=n-2,\\
|C_{n-2}|&=n-2, \quad|C_{n-k-1}|=1,\\
\underset{\alpha\in\Gamma_{0}}{\sum}\underset{X\in\Gamma_{1}}{\sum}\mathrm{occ}(\alpha, X)&= n-1,\quad\text{the number of special cycles},\\
\mathrm{dim}_{k}\hspace{0.1cm}\Lambda_{\Gamma_n}&=2(n-2)+(n-2)(n-3)+(n-3)(n-2)=2(n-2)^2,\\
\mathrm{dim}_{k}\hspace{0.1cm}Z(\Lambda_{\Gamma_{n}})&=1+1+(n-2)^2+(n-2)-(n-1)+(n-2)-(n-2)=\\
&= n^{2}-4n+5.\\
\end{split}
\end{equation*}
\addtocounter{Nota}{4}

\begin{Nota}
$\Lambda_{\Gamma_{n}}$ is a Brauer graph algebra and according to Proposition \ref{grading}, the Brauer configuration algebra $\Lambda_{\Gamma_{n}}$ with quiver $Q_{\Gamma_{n}}$ shown in Figure \ref{examplebca} has a length grading induced by the path algebra $kQ_{\Gamma_{n}}$, provided that for any $\alpha\in \Gamma_{0}$ it holds that $\mu(\alpha)val(\alpha)=n-2$.
\end{Nota}

\subsection{Message of a Brauer Configuration}\label{MBC}
The concept of the message of a Brauer configuration is helpful to categorify some integer sequences in the sense of Ringel and Fahr (see Section 3.1 of the present document, \cite{Fahr1, Fahr2}).\par\smallskip

Let $\Gamma=\{\Gamma_{0},\Gamma_{1},\mu,\mathcal{O}\}$ be a Brauer configuration and let  $U\in\Gamma_{1}$ be a polygon such that $U=\{\alpha^{f_{1}}_{1},\alpha^{f_{2}}_{2},\dots,\alpha^{f_{n}}_{n}\}$, where $f_{i}=\mathrm{occ}(\alpha_{i},U)$. The term
\begin{equation}\label{word}
\begin{split}
w(U)&=\alpha^{f_{1}}_{1}\alpha^{f_{2}}_{2}\dots\alpha^{f_{n}}_{n}
\end{split}
\end{equation}
is said to be the \textit{word associated to $U$}. The sum
\begin{equation}\label{message}
\begin{split}
M(\Gamma)&=\underset{U\in\Gamma_{1}}{\sum}w(U)
\end{split}
\end{equation}
is said to be the \textit{message of the Brauer configuration $\Gamma$}.

\par\bigskip

An \textit{integer specialization} of a Brauer configuration $\Gamma$ is a Brauer configuration $\Gamma^{e}=(\Gamma^{e}_{0},\Gamma^{e}_{1},\mu^{e},\mathcal{O}^{e})$ endowed with a preserving orientation map $e:\Gamma_{0}\rightarrow \mathbb{N}$, such that
\begin{equation}\label{lspecialization}
\begin{split}
\Gamma^{e}_{0}&=\mathrm{Img}\hspace{0.1cm}e\subset\mathbb{N},\\
\Gamma^{e}_{1}&=e(\Gamma_{1}),\quad \text{if}\hspace{0.1cm}H\in \Gamma_{1}\hspace{0.1cm}\text{then}\hspace{0.1cm}e(H)=\{e(\alpha_{i})\mid \alpha_{i}\in H\}\in e(\Gamma_{1}), \\
\mu^{e}(e(\alpha))&=\mu(\alpha),\hspace{0.1cm}\text{for any}\hspace{0.1cm}\alpha\in\Gamma_{0}.
\end{split}
\end{equation}

Besides $e(U)\preceq e(V)$ in $\Gamma^{e}_{1}$ provided that $U\preceq V$ in $\Gamma_{1}$.\par\bigskip

We let $w^{e}(U)=(e(\alpha_{1}))^{f_{1}}(e(\alpha_{2}))^{f_{2}}\dots(e(\alpha_{n}))^{f_{n}}$ denote the specialization under $e$ of a word $w(U)$. In such a case, $M(\Gamma^{e})=\underset{U\in\Gamma^{e}_{1}}{\sum}w^{e}(U)$ is the \textit{specialized message} of the Brauer configuration $\Gamma$ with the usual integer sum and product (in general with the sum and product associated to $\mathrm{Img}\hspace{0.1cm}e$).

\addtocounter{exam}{8}
\begin{exam}\label{integerspecialization}
For the Brauer configuration $\Gamma_n$ whose associated quiver is shown in Figure 1, we define the specialization $e(\alpha)=2^{\alpha}$, $\alpha\in\Gamma_{0}$ with the concatenation in each word given by the difference of the specializations of the vertices belonging to a determined polygon, in such a case for $n$ fixed, we have:
\begin{equation}
\begin{split}
w(U_k)&=(n-2)(n-k-1), \text{ for } 2\leq k \leq n-1,\\
w^e(U_k)&=2^{n-2}-2^{n-k-1},\text{ for } 2\leq k \leq n-1,\\
M(\Gamma_n^e)&=\underset{U_k\in\Gamma_{1}}{\sum}w^{e}(U_k)=\sum_{k=1}^{n-1} 2^{n-2}-2^{n-k-1}.
\end{split}
\end{equation}  
\end{exam}

\section{Homological Ideals Associated to Nakayama Algebras}
In this section, we prove some combinatorial conditions which allow to establish whether an idempotent ideal in some Nakayama algebras is homological or not. We also give the number of homological ideals associated to these algebras via the integer specialization of the Brauer configuration $\Gamma_n$ defined in Example \ref{integerspecialization}. Moreover, we use the number of homological ideals to establish a partition formula for even-index Fibonacci numbers.\par\smallskip

Let $Q$ be either a linearly oriented quiver with underlying graph $\mathbb{A}_n$ or a cycle $\widetilde{\mathbb{A}_n}$ with cyclic
orientation. That is, $Q$ is one of the following quivers

\begin{figure}[H]
\begin{center}
\begin{tikzpicture}[scale=0.55]
\def \radius {2cm}
\def \margin {14} % margin in angles, depends on the radius
{
\node at ({360/8 * (1 - 1)}:\radius)(U2) {\tiny{$1$}};

\node at ({360/8 * (2 - 1)}:\radius)(U3) {\tiny{$2$}};

\node at ({360/8 * (3 - 1)}:\radius) (U4){\tiny{$3$}};

\node at ({360/8 * (4 - 1)}:\radius)(U5) {\tiny{$4$}};

\node at ({360/8 * (5 - 1)}:\radius)(U6) {\tiny{$5$}};

\node at ({360/8 * (6 - 1)}:\radius)(U7) {\tiny{$\ddots$}};

\node at ({360/8 * (7 - 1)}:\radius)(Uk-1) {\tiny{$n-1$}};

\node at ({360/8 * (8 - 1)}:\radius)(Uk) {\tiny{$n$}};

%Arrows in the circle

\draw[->, >=latex] ({360/8* (1 - 1)+\margin}:\radius)
arc ({360/8 * (1 - 1)+\margin}:{360/8 * (1)-\margin}:\radius)node[midway,sloped,above] {};

\draw[->, >=latex] ({360/8* (2 - 1)+\margin}:\radius)
arc ({360/8 * (2 - 1)+\margin}:{360/8 * (2)-\margin}:\radius)node[midway,sloped,above] {};

\draw[->, >=latex] ({360/8* (3 - 1)+\margin}:\radius)
arc ({360/8 * (3 - 1)+\margin}:{360/8 * (3)-\margin}:\radius)node[midway,sloped,above] {};

\draw[->, >=latex] ({360/8* (4 - 1)+\margin}:\radius)
arc ({360/8 * (4 - 1)+\margin}:{360/8 * (4)-\margin}:\radius)node[midway,sloped,above] {};

\draw[->, >=latex] ({360/8* (5 - 1)+\margin}:\radius)
arc ({360/8 * (5 - 1)+\margin}:{360/8 * (5)-\margin}:\radius)node[midway,sloped,below] {};

\draw[->, >=latex] ({360/8* (6 - 1)+\margin}:\radius)
arc ({360/8 * (6 - 1)+\margin}:{360/8 * (6)-\margin}:\radius)node[midway,sloped,below] {};

\draw[->, >=latex] ({360/8* (7 - 1)+\margin}:\radius)
arc ({360/8 * (7 - 1)+\margin}:{360/8 * (7)-\margin}:\radius)node[midway,sloped,below] {};

\draw[->, >=latex] ({360/8* (8 - 1)+\margin}:\radius)
arc ({360/8 * (8 - 1)+\margin}:{360/8 * (8)-\margin}:\radius)node[midway,sloped,below] {};
}

%% Linear
\node["\tiny{$1$}"] at (4,0)(1){$\bullet$};
\node["\tiny{$2$}"] at (5.5,0)(2){$\bullet$};

\node at (6.5,0)(dot){\tiny{$\cdots$}};

\node["\tiny{$n-1$}"] at (7.5,0)(n-1){$\bullet$};
\node["\tiny{$n$}"] at (9,0)(n){$\bullet$};

\node at (3,0)(or){\small{or}};

\path[->] (1) edge (2);
\path[->] (n-1) edge (n);

\end{tikzpicture}
\caption{Quiver $\widetilde{\mathbb{A}_n}$ with cyclic orientation and Dynkin diagram $\mathbb{A}_n$ linearly oriented.}
\end{center}
\end{figure}
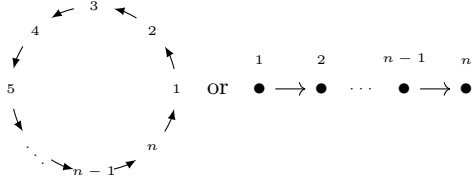\label{Nakayama}

A quotient $A$ of $kQ$ by an admissible ideal $I$ is called a \textit{Nakayama algebra} \cite{Happel}.  
\par\smallskip

In this work, for $n\geq 3$ fixed, we consider the algebras $A_{R_{(i, j, k)}}=kQ/I$ where $Q$ is a Dynkin diagram of type $\mathbb{A}_{n}$ linearly oriented and $I$ is an admissible ideal generated by one relation $R_{(i, j, k)}$ of length $k$ starting at a vertex $i$ and ending at a vertex $j$ of the given quiver, $1\leq i<j \leq n$. The following picture shows the general structure of  quivers $Q$ which we are focused in this paper.
\[  \mathbb{A}_n=1 \to \cdots \to \color{red}i \to i+1  \to \cdots \to i+k=j \color{black}\to j+1 \to\cdots \to n-1 \to n.
\]
The following Lemmas \ref{homological}-\ref{last} allow to determine which idempotent ideals of an algebra $A_{R_{(i, j, k)}}$ are also homological ideals. In this case, Lemmas \ref{homological} and \ref{center} regard the case whenever the idempotent ideal is generated by the trace of just one projective module associated to a vertex of the quiver.  
\addtocounter{lema}{9}
\begin{lema}\label{homological}
Every idempotent ideal $I_r$ of an algebra $A_{R_{(i, j, k)}}$ (see $\mathrm{(\ref{HI1}})$) with $j\leq r$ or $r \leq i$  is homological.
\end{lema}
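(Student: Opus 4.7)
The plan is to invoke Proposition \ref{JAP}(2): since $I_r=tr_{P(r)}(A)$ is by construction the trace of a projective module it is automatically idempotent, and so it suffices to prove that under the stated hypotheses $I_r$ is projective as a right $A$-module. In fact I would exhibit $I_r$ as a direct sum of copies of the indecomposable projective $P(r)$ itself.

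First I would rewrite $I_r$ as the two-sided ideal $Ae_rA$ via the standard identification $\mathrm{Hom}_A(e_rA,A)\cong Ae_r$. Decomposing the right regular module as $A=\bigoplus_{s=1}^{n} P(s)$, one obtains
\[
I_r \;=\; \bigoplus_{s=1}^{n}\,(e_sAe_r)\cdot A,
\]
which is a genuine direct sum because the $s$-th piece is contained in $P(s)$. Since $Q$ is a linearly oriented $\mathbb{A}_n$, each space $e_sAe_r$ is at most one-dimensional, spanned by the unique path $\pi_{s,r}\colon s\to r$ whenever this path survives in $A=kQ/\langle R_{(i,j,k)}\rangle$; by inspection this happens precisely when $s\leq r$ and one does \emph{not} simultaneously have $s\leq i$ and $r\geq j$.

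Next I would identify each cyclic summand $\pi_{s,r}A\subset P(s)$ as the image of the right $A$-homomorphism $\varphi_s\colon P(r)\to P(s)$, $e_ra\mapsto \pi_{s,r}a$, and verify injectivity of $\varphi_s$ under the hypothesis on $r$ by a length count. Writing $\ell(t)$ for the length of $P(t)$ one has $\ell(t)=j-t$ for $t\leq i$ and $\ell(t)=n-t+1$ for $t>i$, while the image $\pi_{s,r}A$ has length $\min\{\ell(r),\,\ell(s)-(r-s)\}$; injectivity is therefore equivalent to $\ell(r)\leq \ell(s)-(r-s)$. If $r\leq i$ then every surviving $s$ satisfies $s\leq r\leq i$, giving $\ell(s)-(r-s)=j-r=\ell(r)$. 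If $r\geq j$ then the nonvanishing of $\pi_{s,r}$ forces $s>i$, giving $\ell(s)-(r-s)=n-r+1=\ell(r)$. Either way equality holds, so every $\varphi_s$ is an isomorphism onto its image and $I_r\cong P(r)^{m}$, with $m=r$ in the first case and $m=r-i$ in the second. Since $P(r)^m$ is projective, Proposition \ref{JAP}(2) delivers the conclusion.

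The main obstacle is the second step: one must track both which paths $\pi_{s,r}$ survive the single relation $R_{(i,j,k)}$ and the length of $P(r)$ relative to its image in $P(s)$, and the whole argument rests on the coincidence that when $r\leq i$ or $r\geq j$ the projective $P(r)$ fits \emph{exactly} into $P(s)$ after stepping along $\pi_{s,r}$. This coincidence fails when $i<r<j$, which explains why the lemma excludes that range and motivates the more delicate analyses carried out in the remaining lemmas of the section.
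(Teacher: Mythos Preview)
Your proof is correct and follows essentially the same route as the paper: both arguments show that $I_r\cong P(r)^{\ell}$ for some $\ell$ and then invoke Proposition~\ref{JAP}(2). The paper simply asserts the trace computations $tr_{P(r)}(P(t))\in\{0,P(r)\}$ case by case, whereas you supply the underlying justification via the explicit length count establishing injectivity of each $\varphi_s$, and you also pin down the multiplicity ($\ell=r$ when $r\leq i$, $\ell=r-i$ when $r\geq j$).
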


\begin{proof}
For $r\leq i$, we have the following cases:

\begin{enumerate}
\item $tr_{P(r)}(P(t))=0$  if  $t>r$.

\item $tr_{P(r)}(P(t))=P(r)$ if  $t \leq r$, where $P(r)$ denotes the $r$-th projective module.
\end{enumerate}

If $r\geq j$, we consider the following cases:

\begin{enumerate}
\item $tr_{P(r)}(P(t))=P(r)$ if $i<t\leq r$, where $P(r)$ denotes the $r$-th projective module.
\item $tr_{P(r)}(P(t))=0$.

\end{enumerate}

In all cases $tr_{P(r)}(A_{R_{(i, j, k)}})=P(r)^{l}$ for some $l\in \mathbb{N}$. The result follows as a consequence of Proposition \ref{JAP}, item 2. We are done.
\end{proof}

\begin{lema}\label{center}
Every idempotent ideal $I_t$  of an algebra $A_{R_{(i, j, k)}}$, with $i+1\leq t \leq j-1$ is not homological.
\end{lema}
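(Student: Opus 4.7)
The plan is to identify the right $A$-module structure of $I_t = Ae_tA$ explicitly, isolate a uniserial summand of it, and then exhibit a nonzero $\mathrm{Ext}^{1}_{A}(I_t, A/I_t)$; by Proposition \ref{JAP}(3) this will force $I_t$ not to be homological.

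First I would decompose $I_t$ along the primitive idempotents of $A_{R_{(i,j,k)}}$ as
\begin{equation*}
I_t \;=\; \bigoplus_{v=1}^{n} e_vAe_tA.
\end{equation*}
Since the only generator of the admissible ideal is the path $\alpha_i\alpha_{i+1}\cdots\alpha_{j-1}$, a path $v\to w$ vanishes in $A$ iff $v\leq i$ and $w\geq j$. Using the hypothesis $i+1\leq t\leq j-1$, a direct count then gives $e_vAe_tA=0$ for $v>t$; $e_vAe_tA\cong P(t)$ for $i<v\leq t$ (nothing on the tail of $P(t)$ is truncated, because $v>i$); and for $1\leq v\leq i$ the piece $e_vAe_tA$ equals the uniserial submodule $M$ of $P(t)$ with composition factors $S(t),S(t+1),\dots,S(j-1)$, the tail $S(j),\dots,S(n)$ being killed off by the relation. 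Consequently
\begin{equation*}
I_t \;\cong\; M^{\,i} \,\oplus\, P(t)^{\,t-i},
\end{equation*}
and because $i\geq 1$ the uniserial summand $M$ is genuinely present.

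Next I would use the natural projective resolution
\begin{equation*}
0\longrightarrow P(j)\longrightarrow P(t)\longrightarrow M\longrightarrow 0,
\end{equation*}
obtained from the fact that the kernel of $P(t)\twoheadrightarrow M$ is the right submodule of $P(t)$ generated by $\alpha_t\alpha_{t+1}\cdots\alpha_{j-1}$, which is isomorphic to $P(j)$. A parallel analysis of the quotient shows $A/I_t=\bigoplus_{v<t}M(v,t-1)\oplus\bigoplus_{v>t}P(v)$, where $M(v,t-1)$ denotes the uniserial module with composition factors $S(v),\dots,S(t-1)$. In particular I obtain the two facts I need: $\mathrm{Hom}_A(P(t),A/I_t)=e_t(A/I_t)=0$ and $\mathrm{Hom}_A(P(j),A/I_t)=e_j(A/I_t)=P(j)\neq 0$ (using $j>t$ and $j\leq n$). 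Applying $\mathrm{Hom}_A(-,A/I_t)$ to the resolution, the long exact sequence collapses to $\mathrm{Ext}^{1}_{A}(M,A/I_t)\cong P(j)\neq 0$, so $\mathrm{Ext}^{1}_{A}(I_t,A/I_t)\neq 0$, and Proposition \ref{JAP}(3) concludes.

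The main technical obstacle is the bookkeeping in the first step: one must check that the admissibility condition $v\leq i$ and $w\geq j$ really is the \emph{only} source of truncation in $e_vAe_tA$, so that for $v\leq i$ (and only then) the tail of $P(t)$ is cut off exactly at $S(j-1)$. Once this is correctly established, the resolution of $M$ and the computation of the relevant Hom-spaces into $A/I_t$ become essentially formal and yield the nonvanishing Ext that obstructs homologicality.
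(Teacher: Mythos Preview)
Your proof is correct and follows essentially the same strategy as the paper: isolate the uniserial summand $M$ of $I_t$ (the paper calls it $L_t$), use its projective resolution $0\to P(j)\to P(t)\to M\to 0$, and exhibit a nonzero $\mathrm{Ext}^1$ against a summand of $A/I_t$ (the paper tests only against $P(j)$, you against all of $A/I_t$, but the computation is the same). One small slip of wording: $M$ is a \emph{quotient} of $P(t)$, not a submodule---it is rather a submodule of $P(v)$ for $v\leq i$---though your exact sequence $0\to P(j)\to P(t)\to M\to 0$ makes clear you have the right module in mind.
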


\begin{proof}
Consider $L_t=tr_{P(t)}P(i)=P(i)/S(i)\oplus\cdots \oplus S(t-1) $ where $S(k)$ denote the $k$-th simple module, also note that there are not morphisms from  $P(t)$ to $P(j)$ if $t\neq j$ which means that $\textrm{Ext}^{1}_{A_{R_{(i, j, k)}}}(L_t,P(j))$ is a direct summand of $\textrm{Ext}^{1}_{A_{R_{(i, j, k)}}}(I_t,A_{R_{(i, j, k)}}/I_t)$, provided that $L_t$ is a direct summand of $I_t$ and $P(j)$ is a direct summand of $A_{R_{(i, j, k)}}/I_{t}$. Applying the functor $\textrm{Hom}_{A_{R_{(i, j, k)}}}(-,P(j))$ to a projective resolution of $L_t$ with the form

\[
0 \to P(j) \to P(t) \to L_t \to 0
\]

it is obtained the sequence

\[
0 \to \textrm{Hom}_{A_{R_{(i, j, k)}}}(P(t),P(j)) \to  \textrm{Hom}_{A_{R_{(i, j, k)}}}(P(j),P(j)) \to 0.
\]

Thus, $\textrm{Ext}^{1}_{A_{R_{(i, j, k)}}}(L_t,P(j)) \cong k$ and $\textrm{Ext}^{1}_{A_{R_{(i, j, k)}}}(I_{i},A_{R_{(i, j, k)}}/I_i) \not= 0$. Then the idempotent ideal $I_t$ is not an homological ideal
as a consequence of Proposition \ref{JAP}, item 3.
\end{proof}

\begin{lema}\label{center2}

If each idempotent ideal $I_{\alpha_w}$  of an algebra $A_{R_{(i, j, k)}}$ is not homological then every idempotent ideal of the form $I_{\alpha_1,\ldots,\alpha_l}$ is not homological for $2\leq l\leq k-1$.    
\end{lema}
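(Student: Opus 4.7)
The plan is to reduce to the single-index situation of Lemma \ref{center} by exhibiting a copy of $L_{\alpha_1}$ (with $\alpha_1$ the minimal index) as a direct summand of $I_{\alpha_1,\ldots,\alpha_l}$, and then repeating the $\mathrm{Ext}^1$ computation of Lemma \ref{center} against $P(j)$.

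First, by Lemma \ref{center} the hypothesis forces $\alpha_w\in\{i+1,\ldots,j-1\}$ for every $w$, so after reindexing I assume $i+1\le\alpha_1<\alpha_2<\cdots<\alpha_l\le j-1$. Since the trace of a direct sum is the sum of traces,
\[
I_{\alpha_1,\ldots,\alpha_l}\;=\;\sum_{w=1}^{l} tr_{P(\alpha_w)}(A_{R_{(i,j,k)}})\;=\;\sum_{w=1}^{l} I_{\alpha_w}.
\]
The key observation is to examine this sum piece by piece in the decomposition $A_{R_{(i,j,k)}}=\bigoplus_{r=1}^{n}P(r)$. Because $A_{R_{(i,j,k)}}$ is Nakayama each $P(r)$ is uniserial, so its cyclic submodules form a chain. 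Inside $P(r)$, the submodule $tr_{P(\alpha_w)}(P(r))$ is cyclic, generated by the path $r\to\alpha_w$, and the one corresponding to the smallest $\alpha_w$ (namely $\alpha_1$) is the largest of the chain. Hence the piece of $I_{\alpha_1,\ldots,\alpha_l}$ in $P(r)$ coincides with $tr_{P(\alpha_1)}(P(r))$. For $r\le i$ this piece is exactly the module $L_{\alpha_1}$ from the proof of Lemma \ref{center} (uniserial with composition factors $S(\alpha_1),S(\alpha_1+1),\ldots,S(j-1)$), so $L_{\alpha_1}$ sits as a direct summand of $I_{\alpha_1,\ldots,\alpha_l}$. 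Moreover, since $\alpha_l\le j-1<j$, no $tr_{P(\alpha_w)}(P(j))$ is non-zero, so $P(j)$ remains a whole direct summand of $A_{R_{(i,j,k)}}/I_{\alpha_1,\ldots,\alpha_l}$.

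With these two direct summands in hand I would finish as in Lemma \ref{center}: $\mathrm{Ext}^{1}_{A_{R_{(i,j,k)}}}(L_{\alpha_1},P(j))$ is a direct summand of $\mathrm{Ext}^{1}_{A_{R_{(i,j,k)}}}(I_{\alpha_1,\ldots,\alpha_l},A_{R_{(i,j,k)}}/I_{\alpha_1,\ldots,\alpha_l})$. Applying $\mathrm{Hom}_{A_{R_{(i,j,k)}}}(-,P(j))$ to the short projective resolution $0\to P(j)\to P(\alpha_1)\to L_{\alpha_1}\to 0$, and using $\mathrm{Hom}_{A_{R_{(i,j,k)}}}(P(\alpha_1),P(j))=0$ (no paths from $j$ back to $\alpha_1<j$) together with $\mathrm{Hom}_{A_{R_{(i,j,k)}}}(P(j),P(j))\cong k$, yields $\mathrm{Ext}^{1}_{A_{R_{(i,j,k)}}}(L_{\alpha_1},P(j))\cong k$. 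Consequently $\mathrm{Ext}^{1}_{A_{R_{(i,j,k)}}}(I_{\alpha_1,\ldots,\alpha_l},A_{R_{(i,j,k)}}/I_{\alpha_1,\ldots,\alpha_l})\neq 0$, and Proposition \ref{JAP}(3) forces $I_{\alpha_1,\ldots,\alpha_l}$ not to be homological.

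The main obstacle is the direct-summand identification above: one must argue that passing from a single trace $tr_{P(\alpha_1)}(A_{R_{(i,j,k)}})$ to the mixed trace $\sum_w tr_{P(\alpha_w)}(A_{R_{(i,j,k)}})$ does not destroy the copy of $L_{\alpha_1}$. This is where the uniseriality of each $P(r)$ is used in an essential way, together with the hypothesis $\alpha_w\le j-1$, which ensures that every generating path $r\to\alpha_w$ is non-zero modulo $R_{(i,j,k)}$. Once these structural facts are set up, the rest of the proof is a direct transcription of the argument for Lemma \ref{center}.
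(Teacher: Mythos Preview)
Your argument is correct and follows essentially the same route as the paper: both proofs use the hypothesis (via Lemma~\ref{center}) to place every $\alpha_w$ in $[i+1,j-1]$, identify $L_{\alpha_1}$ as a direct summand of $I_{\alpha_1,\ldots,\alpha_l}$ and $P(j)$ as a direct summand of the quotient, and then compute $\mathrm{Ext}^{1}(L_{\alpha_1},P(j))\cong k$ from the resolution $0\to P(j)\to P(\alpha_1)\to L_{\alpha_1}\to 0$ to conclude by Proposition~\ref{JAP}(3). Your justification of the summand identifications via uniseriality of the $P(r)$ is in fact a bit more explicit than the paper's formula~(\ref{total}), but the strategy is the same.
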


\begin{proof}
For $l$ fixed, we start by computing $I_{\alpha_1,\ldots,\alpha_l}$,  

\[I_{\alpha_1,\ldots,\alpha_l}=tr_{P(\alpha_1)\oplus \cdots \oplus P(\alpha_l)}(A_{R_{(i, j, k)}})= \displaystyle \sum_{w=1}^{l}tr_{P(\alpha_w)}(A_{R_{(i, j, k)}}) \]

In accordance with the hypothesis $\alpha_w \in [i+1,j-1]$ and taking into account that

\begin{equation}\label{middle}
\begin{split}
tr_{P(\alpha_w)}(A_{R_{(i, j, k)}})=\underbrace{L_{\alpha_w}}_{i- times} \oplus  \underbrace{ P(\alpha_w)}_{\alpha_w-i-times} \oplus\underbrace{0}_{n-\alpha_w-times}
\end{split}
\end{equation}

\begin{equation}\label{total}
\begin{split}
tr_{P(\alpha_1)\oplus \cdots \oplus P(\alpha_l)}(A_{R_{(i, j, k)}})=\underbrace{L_{\alpha_1}}_{i- times}\oplus  \bigoplus_{w=1}^{l} P(\alpha_w) \oplus\underbrace{0}_{n-i-l-times}
\end{split}
\end{equation}

it holds that according to the identity (\ref{total}), $P(j)$ is a direct summand of  $A_{R_{(i, j, k)}}/I_{\alpha_1\ldots\alpha_l}$ and $L_{\alpha_1}$ has the following projective resolution

\[
0 \to P(j) \to P(\alpha_1) \to L_{\alpha_1} \to 0
\]

Applying the functor $ \textrm{Hom}_{A_{R_{(i, j, k)}}}(-,P(j))$,  we have that $\textrm{Ext}^{1}_{A_{R_{(i, j, k)}}}(L_{\alpha_1},P(j))\neq 0$ and by Proposition \ref{JAP} item 3, we conclude that the idempotent ideal $I_{\alpha_1\ldots\alpha_l}$ is not an homological ideal.
\end{proof}

\begin{lema}\label{17}
For $l$ fixed, if each idempotent ideal $I_{\alpha_w}$  of an algebra $A_{R_{(i, j, k)}}$ with $1 \leq w \leq l$ is homological then every idempotent ideal of the form $I_{\alpha_1,\ldots,\alpha_l}$ is also homological.  
\end{lema}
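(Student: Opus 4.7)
The plan is to reduce the statement to Proposition~\ref{JAP}~(2) by showing that, under the hypothesis, the idempotent ideal $I_{\alpha_1,\ldots,\alpha_l}$ is $A_{R_{(i,j,k)}}$-projective. By hypothesis each $I_{\alpha_w}$ is homological, so combining Lemmas~\ref{homological} and \ref{center} forces $\alpha_w\in\{1,\dots,i\}\cup\{j,\dots,n\}$ for every $w$. Moreover, the computation in the proof of Lemma~\ref{homological} already gives that each individual trace satisfies $tr_{P(\alpha_w)}(A_{R_{(i,j,k)}})\cong P(\alpha_w)^{m_w}$ for some $m_w\in\mathbb{N}$.

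The core of the argument exploits that $A_{R_{(i,j,k)}}$ is a linearly oriented Nakayama algebra, so every indecomposable projective $P(t)$ is uniserial. Writing $A_{R_{(i,j,k)}}=\bigoplus_{t=1}^{n}P(t)$ and using that the trace commutes with direct sums, one obtains
\[
I_{\alpha_1,\ldots,\alpha_l}=\sum_{w=1}^{l}tr_{P(\alpha_w)}(A_{R_{(i,j,k)}})=\bigoplus_{t=1}^{n}\Big(\sum_{w=1}^{l}tr_{P(\alpha_w)}(P(t))\Big).
\]
For each fixed $t$, the non-zero summands $tr_{P(\alpha_w)}(P(t))$ are all submodules of the uniserial module $P(t)$, so they form a chain and their sum equals the largest of them. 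An explicit check of the two sub-cases $t\le i$ (only the $\alpha_w\le i$ can contribute, because when $\alpha_w\ge j$ the simple $S(\alpha_w)$ is not a composition factor of $P(t)$) and $t>i$ (symmetrically, only the $\alpha_w\ge j$ with $\alpha_w\ge t$ contribute) shows that this largest submodule has top $S(\alpha^{\ast}(t))$ and the same length as $P(\alpha^{\ast}(t))$, where $\alpha^{\ast}(t)$ denotes the minimal contributing $\alpha_w$; hence it is isomorphic to $P(\alpha^{\ast}(t))$ (taken to be $0$ if no $\alpha_w$ contributes).

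Putting everything together yields an isomorphism $I_{\alpha_1,\ldots,\alpha_l}\cong\bigoplus_{t=1}^{n}P(\alpha^{\ast}(t))$ of $A_{R_{(i,j,k)}}$-modules, exhibiting the ideal as a projective. Since traces are automatically idempotent, Proposition~\ref{JAP}~(2) gives that $I_{\alpha_1,\ldots,\alpha_l}$ is homological. The main obstacle is precisely the bookkeeping verifying that the sum of the various trace submodules inside each uniserial $P(t)$ remains an indecomposable projective of the expected length; this is where the condition $\alpha_w\notin\{i+1,\dots,j-1\}$ is crucial, and it is exactly the step that fails in Lemmas~\ref{center} and \ref{center2}, where a non-projective quotient $L_{\alpha_1}$ appears and obstructs projectivity of the ideal via a non-vanishing $\mathrm{Ext}^{1}$. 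Once the uniserial bookkeeping is in place, no derived-functor computation is required.
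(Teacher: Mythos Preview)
Your proof is correct and follows essentially the same approach as the paper: show that $I_{\alpha_1,\ldots,\alpha_l}$ is projective and invoke Proposition~\ref{JAP}(2). The paper's one-line proof merely records that each $tr_{P(\alpha_w)}(A_{R_{(i,j,k)}})\cong P(\alpha_w)^{m_w}$ and leaves implicit the uniseriality argument you spell out (that the sum of these trace submodules inside each uniserial $P(t)$ is again an indecomposable projective).
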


\begin{proof}
It suffices to observe that $tr_{P(\alpha_w)}(A_{R_{(i, j, k)}})=P(\alpha_w)^{l}$ for some $l\in \mathbb{N}$.
\end{proof}

\begin{lema}\label{18}
Every ideal $I_{i, t}$ or $I_{t, j}$ of an algebra $A_{R_{(i, j, k)}}$ is homological.
\end{lema}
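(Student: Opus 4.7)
The plan is to split on the position of $t$. When $t \leq i$ or $t \geq j$, both of the single-vertex ideals $I_i, I_t$ (respectively $I_t, I_j$) are homological by Lemma \ref{homological}, and Lemma \ref{17} immediately yields that $I_{i,t}$ (respectively $I_{t,j}$) is homological. The only case that requires genuine work is the interior $i < t < j$, in which $I_t$ itself is not homological by Lemma \ref{center}; here the extra projective $P(i)$ or $P(j)$ must repair the defect.

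For $I_{i,t}$ with $i < t < j$ I would compute the trace using the uniserial structure of the indecomposable projectives, namely $P(r)\cong [r,\ldots,j-1]$ for $r\leq i$ and $P(r)\cong [r,\ldots,n]$ for $r>i$ (where $[a,\ldots,b]$ denotes the uniserial module with composition factors $S(a),\ldots,S(b)$ from top to bottom). Since $t>i$, the contribution of $P(t)$ to $P(s)$ with $s\leq i$ is the submodule $[t,\ldots,j-1]$, which is already absorbed by $tr_{P(i)}(P(s))=P(i)=[i,\ldots,j-1]$; in the range $i<s\leq t$ the contribution is a copy of $P(t)$. Summing yields
\[
I_{i,t} \cong P(i)^{i} \oplus P(t)^{t-i},
\]
a projective $A$-module. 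As $I_{i,t}$ is idempotent (being a trace of a projective), Proposition \ref{JAP}(2) finishes this case.

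The interior case of $I_{t,j}$ is the main obstacle. The analogous trace computation yields $I_{t,j} \cong L^{i} \oplus P(t)^{t-i} \oplus P(j)^{j-t}$ with $L := [t,\ldots,j-1]$, and now $L$ is genuinely non-projective (no copy of $P(j)$ is available to absorb it, since $P(j)$ admits no morphism into $P(s)$ for $s\leq i$). To work around this I plan to invoke Proposition \ref{JAP}(3) together with the length-one projective resolution
\[
0 \to P(j) \to P(t) \to L \to 0,
\]
which automatically forces $\mathrm{Ext}^{n}(L,-) = 0$ for $n\geq 2$. A summand-by-summand computation of $A/I_{t,j}$ shows that each $P(s)/I_{t,j}$ is one of $[s,\ldots,t-1]$, $[s,\ldots,j-1]$, or $[s,\ldots,n]$, and in none of these pieces do $S(t)$ or $S(j)$ occur as composition factors. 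Hence $\mathrm{Hom}(P(t),A/I_{t,j}) = \mathrm{Hom}(P(j),A/I_{t,j}) = 0$. Applying $\mathrm{Hom}(-,A/I_{t,j})$ to the resolution of $L$ then forces $\mathrm{Hom}(L,A/I_{t,j}) = \mathrm{Ext}^{1}(L,A/I_{t,j}) = 0$ as well; the projective summands $P(t)$ and $P(j)$ contribute no Ext, so $\mathrm{Ext}^{n}(I_{t,j},A/I_{t,j})=0$ for all $n\geq 0$, and Proposition \ref{JAP}(3) closes the proof.
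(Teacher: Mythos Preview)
Your proof is correct and follows essentially the same route as the paper's: both split on whether $I_t$ is already homological (invoking Lemma~\ref{17}), handle $I_{i,t}$ in the interior case by observing the trace is projective and citing Proposition~\ref{JAP}(2), and handle $I_{t,j}$ via the short resolution $0\to P(j)\to P(t)\to L_t\to 0$ together with Proposition~\ref{JAP}(3). Your packaging of the $\mathrm{Hom}$-vanishing in the $I_{t,j}$ case via composition factors (neither $S(t)$ nor $S(j)$ appears in any summand of $A/I_{t,j}$) is exactly the content of the paper's eight-case table, stated a bit more conceptually.
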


\begin{proof}
In accordance with the previous Lemma we can conclude that if $I_t$ is homological then the result holds. If it is not the case then we consider the following cases:

\begin{enumerate}
\item For $I_{t}$ non homological we can compute  $I_{i,t}= tr_{P(i)\oplus P(t)}(A_{R_{(i, j, k)}})$ (see identity (\ref{middle})) since  $r\leq i$ then $tr_{P(i)}P(r)=P(i)$ therefore ideal  $I_{i,t}$ is projective and idempotent. Thus for Proposition \ref{JAP}, item 2. We conclude that ideal  $I_{i,t}$ is homological.
\item  We start by computing $I_{t,j}$ as follows:

\[I_{t,j}=tr_{P(t)\oplus P(j)}(A_{R_{(i, j, k)}})=\underbrace{L_{t}}_{i- times}\oplus \underbrace{ P(t)}_{t-i-times}\oplus \underbrace{P(j)}_{j-t-times} \oplus\underbrace{0}_{n-j-times}  \]

 $A_{R_{(i, j, k)}}/I_{t, j}$ is given by:

\[A_{R_{(i, j, k)}}/I_{t, j}= \frac{P(1)\oplus P(2) \oplus \cdots \oplus P(i)\oplus \cdots \oplus P(t) \oplus \cdots \oplus P(j)\oplus \cdots \oplus P(n)}{L_t \oplus \cdots \oplus L_t\oplus P(t) \oplus \cdots  \oplus P(t)\oplus P(j) \oplus \cdots \oplus P(j)\oplus 0 \oplus \cdots \oplus 0}\]

In order to compute $\textrm{Ext}^{1}_{A_{R_{(i, j, k)}}}=(I_{t, j},A_{R_{(i,j,k)}}/I_{t, j})$ we consider the projective resolution of $L_t$

\[
0 \to P(j) \to P(t) \to L_t \to 0.
\]

Applying the functor $ \textrm{Hom}_{A_{R_{(i, j, k)}}}(-,A_{R_{(i, j, k)}}/I_{t, j})$ we obtain:

\[
0 \to  \textrm{Hom}_{A_{R_{(i, j, k)}}}(P(t),A_{R_{(i, j, k)}}/I_{t, j}) \to \textrm{Hom}_{A_{R_{(i, j, k)}}}(P(j),A_{R_{(i, j, k)}}/I_{t, j}) \to 0
\]

Taking into account that \par\smallskip
\resizebox{11.5cm}{!} {
$ \left\{ \begin{array}{l}
 \textrm{Hom}_{A_{R_{(i, j, k)}}}(P(t),\frac{P(z)}{L_t})=0 \quad if \quad  1 \leq z \leq i   \\
\\ \textrm{Hom}_{A_{R_{(i, j, k)}}}(P(t),\frac{P(y)}{P(t)})=0  \quad if \quad   i+1 \leq y \leq t-1\\
\\  \textrm{Hom}_{A_{R_{(i, j, k)}}}(P(t),\frac{P(v)}{P(j)})=0  \quad if \quad  t+1 \leq v \leq j-1 \\
\\ \textrm{Hom}_{A_{R_{(i, j, k)}}}(P(t),P(u))=0  \quad if \quad   j+1 \leq u \leq n\\
\end{array}
\right.$
$ \left\{ \begin{array}{l}
 \textrm{Hom}_{A_{R_{(i, j, k)}}}(P(j),\frac{P(z)}{L_t})=0 \quad if \quad 1 \leq z \leq i \\
\\ \textrm{Hom}_{A_{R_{(i, j, k)}}}(P(j),\frac{P(y)}{P(t)})=0 \quad if \quad i+1 \leq y \leq t-1\\
\\  \textrm{Hom}_{A_{R_{(i, j, k)}}}(P(j),\frac{P(v)}{P(j)})=0 \quad if \quad t+1 \leq v \leq j-1 \\
\\ \textrm{Hom}_{A_{R_{(i, j, k)}}}(P(j),P(u))=0 \quad if \quad j+1 \leq u \leq n\\
\end{array}
\right.$}

\par\bigskip
We conclude that $\textrm{Ext}^{n}_{A_{R_{(i, j, k)}}}(I_{t, j},A_{R_{(i, j, k)}}/I_{t, j})=0$ and that the idempotent ideal $I_{t, j}$ is an homological ideal as a consequence of Proposition \ref{JAP}, item 3.
\end{enumerate}\end{proof}

\addtocounter{Nota}{6}
\begin{Nota}
If the non homological ideal  $I_t$ has the form $I_{t_1,\ldots, t_n}$ the previous Lemma \ref{18} also holds.
\end{Nota}
\addtocounter{lema}{1}
\begin{lema}\label{main}
For $1\leq h \leq i-1$, $1\leq l \leq k-1$ and $1\leq m \leq n-j$ fixed. Every idempotent ideal of the form $I_{z_1,\ldots,z_h, t_1, \ldots ,t_l, y_1, \ldots , y_m }$  of an algebra $A_{R_{(i, j, k)}}$, where $z_a \in [1,i-1],$ $t_b \in [i+1, j-1]$, $y_c\in [j+1,n]$ is not homological.
\end{lema}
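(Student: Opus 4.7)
The plan is to generalise the Ext-obstruction argument used for Lemma \ref{center2}: I will locate an $L_{t_1}$-type direct summand inside $I=I_{z_1,\dots,z_h,t_1,\dots,t_l,y_1,\dots,y_m}$, pair it with a carefully chosen summand of $A_{R_{(i,j,k)}}/I$ supported only at vertices $\geq j$, and then show that $\mathrm{Ext}^1$ between them is nonzero, thereby contradicting item 3 of Proposition \ref{JAP}. After relabelling, I may assume $z_1<\dots<z_h$, $t_1<\dots<t_l$, and $y_1<\dots<y_m$.

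First I would decompose $I$ as a right module along the primitive idempotents, $I=\bigoplus_r I\cdot e_r =\bigoplus_r (I\cap P(r))$, and compute each summand. For $r\leq i$ the module $P(r)$ is uniserial with composition factors $S(r),\dots,S(j-1)$ (the relation $R_{(i,j,k)}$ kills every path passing through $i$ into $j$), so its submodules form a chain and the trace is determined by the generator with smallest top. For $r$ in the range $z_h<r\leq i$, no $P(z_a)$ contributes (since $z_a<r$) and no $P(y_c)$ contributes (since $y_c\geq j+1$ is not a composition factor of $P(r)$), while each $P(t_b)$ contributes the submodule of $P(r)$ with top $S(t_b)$. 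Consequently $I\cap P(r)$ equals the submodule with top $S(t_1)$, which is isomorphic to $L_{t_1}$ in the notation of Lemma \ref{center}. Since $z_h\leq i-1$, at least one such $r$ exists (for instance $r=i$), so $L_{t_1}$ is a direct summand of $I$.

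Next I would analyse the $j$-th component $Q_j:=P(j)/(I\cap P(j))$ of $A_{R_{(i,j,k)}}/I$. The composition factors of $P(j)$ are $S(j),\dots,S(n)$; among the listed generators of $I$ only the $P(y_c)$ map nontrivially into $P(j)$, and the sum of these chain-submodules equals the submodule of $P(j)$ with top $S(y_1)$. Hence $Q_j$ is uniserial with top $S(j)$ and composition factors $S(j),\dots,S(y_1-1)$, in particular nonzero with support contained in $\{j,j+1,\dots,y_1-1\}$ and therefore not containing $S(t_1)$ because $t_1<j$. Feeding the projective resolution
\[
0\longrightarrow P(j)\longrightarrow P(t_1)\longrightarrow L_{t_1}\longrightarrow 0
\]
from the proof of Lemma \ref{center2} into $\mathrm{Hom}_{A_{R_{(i,j,k)}}}(-,Q_j)$ gives $\mathrm{Hom}(P(t_1),Q_j)\cong Q_je_{t_1}=0$ and $\mathrm{Hom}(P(j),Q_j)\cong Q_je_j\cong k$. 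The long exact sequence then yields $\mathrm{Ext}^1(L_{t_1},Q_j)\cong k\neq 0$; combined with the direct-summand decompositions above this gives $\mathrm{Ext}^1(I,A_{R_{(i,j,k)}}/I)\neq 0$, and Proposition \ref{JAP}(3) concludes that $I$ is not homological.

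The main obstacle compared with Lemma \ref{center2} is that, once the $P(y_c)$'s are present, $P(j)$ is no longer a direct summand of $A_{R_{(i,j,k)}}/I$: its bottom composition factors get chopped off by the images of the $P(y_c)$'s, so one cannot simply reuse the argument verbatim. What saves the proof is that the quotient $Q_j$ retains $S(j)$ as its top and has its entire support at vertices $\geq j$; the orthogonality $\mathrm{Hom}(P(t_1),Q_j)=0$ and the non-vanishing $\mathrm{Hom}(P(j),Q_j)\neq 0$ both survive this truncation, which is precisely what is needed to keep the $\mathrm{Ext}^1$ obstruction alive.
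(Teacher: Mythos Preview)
Your proof is correct and follows essentially the same approach as the paper's: you isolate $L_{t_1}$ as a direct summand of $I$, identify the quotient $Q_j=P(j)/(I\cap P(j))$ (which the paper writes as $P(j)/P(y_1)$) as a summand of $A_{R_{(i,j,k)}}/I$, and then apply $\mathrm{Hom}(-,Q_j)$ to the resolution $0\to P(j)\to P(t_1)\to L_{t_1}\to 0$ to obtain $\mathrm{Ext}^1(L_{t_1},Q_j)\cong k$. Your presentation is slightly more careful in verifying that $L_{t_1}$ really appears (via the component at $r=i$) and in justifying why $\mathrm{Hom}(P(t_1),Q_j)=0$, but the argument is the same.
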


\begin{proof}
For $h$, $l$ and $m$ fixed, we start by computing $I_{z_1,\ldots, z_h, t_1, \ldots ,t_l, y_1, \ldots , y_m}$,  
\begin{equation}\label{*}
\begin{split}
I_{z_1,\ldots, z_h, t_1, \ldots ,t_l, y_1,\ldots , y_m}&=tr_{P(z_1)\oplus \ldots \oplus P(z_h)\oplus P(t_1) \oplus \ldots \oplus P(t_l)\oplus P(y_1), \oplus \ldots \oplus P(y_m)}(A_{R_{(i, j, k)}}) \\
&= \underbrace{\displaystyle \sum_{a=1}^{h}tr_{P(z_a)}(A_{R_{(i, j, k)}})}_{(1)} + \underbrace{\displaystyle \sum_{b=1}^{l}tr_{P(t_b)}(A_{R_{(i, j, k)}})}_{(2)} + \underbrace{\displaystyle \sum_{c=1}^{m}tr_{P(y_c)}(A_{R_{(i, j, k)}})}_{(3)}
\end{split}
\end{equation}

The traces $(1)$, $(2)$, $(3)$  can be written as follows:

\begin{equation}
\begin{split}
\displaystyle \sum_{a=1}^{h}tr_{P(z_a)}(A_{R_{(i, j, k)}})&=\bigoplus_{a=1}^{h}P(z_a)\oplus 0 \oplus \cdots \oplus 0, \\
\displaystyle \sum_{b=1}^{l}tr_{P(t_b)}(A_{R_{(i, j, k)}})&= \underbrace{L_{t_1}}_{i- times}\oplus  \bigoplus_{b=1}^{l} P(t_b) \oplus\underbrace{0}_{n-i-l-times},\\
\displaystyle \sum_{c=1}^{m}tr_{P(y_c)}(A_{R_{(i, j, k)}})&= \underbrace{0}_{i- times}\oplus\underbrace{P(y_1)}_{j-i - times} \oplus \bigoplus_{c=1}^{m} P(y_c) \oplus\underbrace{0}_{n-m-j-times}.\\
\end{split}
\end{equation}

Thus, the ideal  $I_{z_1,\ldots, z_h, t_1, \ldots ,t_l, y_1,\ldots , y_m}$ has the following form:
\begin{equation}\label{13}
\bigoplus_{a=1}^{h}P(z_a) \oplus \underbrace{L_{t_1}}_{i-h-times}\oplus \bigoplus_{b=1}^{l}P(t_b)\oplus \underbrace{P(y_1)}_{j-i-l-times} \oplus \bigoplus_{c=1}^{m}P(y_c)\oplus \underbrace{0}_{n-m-j- times}
\end{equation}

In accordance with (\ref{13}) we have that $\frac{P(j)}{P(y_1)}$ is a direct summand of the quotient $A_{R_{(i, j, k)}}/I_{z_1,\ldots, z_h, t_1, \ldots ,t_l, y_1,\ldots , y_m}$ and $L_{t_1}$ has the following projective resolution:

\begin{equation}\label{res19}
\begin{split}
0 \to P(j) \to P(t_1) \to L_{t_1} \to 0.
\end{split}
\end{equation}

Applying the functor $ \textrm{Hom}_{A_{R_{(i, j, k)}}}\left( -,\frac{P(j)}{P(y_1)}\right) $ to the resolution (\ref{res19}) we obtain the following exact sequence

\[
0 \to  \textrm{Hom}_{A_{R_{(i, j, k)}}}\left( P(t_1),\frac{P(j)}{P(y_1)}\right)  \to  \textrm{Hom}_{A_{R_{(i, j, k)}}}\left( P(j),\frac{P(j)}{P(y_1)}\right)  \to 0
\]\par\smallskip

Then $\textrm{Ext}^{1}_{A_{R_{(i, j, k)}}}\left( L_t,\frac{P(j)}{P(y_1)}\right)  \cong k$ and\par\smallskip
\begin{centering}
 $\textrm{Ext}^{1}_{A_{R_{(i, j, k)}}}(I_{z_1,\ldots, z_h, t_1, \ldots ,t_l, y_1,\ldots , y_m},A_{R_{(i, j, k)}}/I_{z_1,\ldots, z_h, t_1, \ldots ,t_l, y_1,\ldots , y_m}) \not= 0$\par\smallskip
\end{centering}

 by Proposition \ref{JAP}, item 3, we conclude that the idempotent ideal $I_{z_1,\ldots, z_h, t_1, \ldots ,t_l, y_1,\ldots , y_m}$ is not an homological ideal.
\end{proof}

\begin{lema}\label{last}
For $1\leq h \leq i-1$, $1\leq l \leq k-1$ and $1\leq m \leq n-j$ fixed. The idempotent ideals $I_{z_1,\ldots,z_h, t_1, \ldots ,t_l}$ and $I_{t_1, \ldots ,t_l, y_1, \ldots , y_m }$  of an algebra $A_{R_{(i, j, k)}}$ where $z_a \in [1,i-1],$  $t_b \in [i+1, j-1],$ $y_c\in [j+1,n]$ are not homological.
\end{lema}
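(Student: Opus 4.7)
The plan is to mirror the proof of Lemma \ref{main}, adapting the trace computation to each of the two shapes of ideals. In both cases I would decompose $I$ as a sum of traces along its generators, identify $L_{t_1}$ as a direct summand of $I$, identify a module whose top involves $S(j)$ as a direct summand of $A_{R_{(i,j,k)}}/I$, and then invoke the projective resolution
\begin{equation*}
0 \to P(j) \to P(t_1) \to L_{t_1} \to 0
\end{equation*}
together with Proposition \ref{JAP}(3) to conclude that $\textrm{Ext}^{1}_{A_{R_{(i,j,k)}}}(I, A_{R_{(i,j,k)}}/I) \neq 0$.

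For the ideal $I_{z_1,\ldots,z_h,t_1,\ldots,t_l}$, I would expand
\begin{equation*}
I_{z_1,\ldots,z_h,t_1,\ldots,t_l} \;=\; \sum_{a=1}^{h} tr_{P(z_a)}(A_{R_{(i,j,k)}}) \;+\; \sum_{b=1}^{l} tr_{P(t_b)}(A_{R_{(i,j,k)}})
\end{equation*}
using the same formulas as in Lemma \ref{main}. Since no trace from a projective indexed by a vertex $\geq j$ is present, the components of $I$ in positions $j, j+1, \ldots, n$ vanish, so $P(j)$ itself is a direct summand of $A_{R_{(i,j,k)}}/I$. Because $h \leq i-1$, at least one of the first $i$ summands of $A_{R_{(i,j,k)}}$ is not absorbed by any $P(z_a)$ and therefore contributes $L_{t_1}$ to $I$. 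Applying $\textrm{Hom}_{A_{R_{(i,j,k)}}}(-, P(j))$ to the resolution above, the vanishing $\textrm{Hom}_{A_{R_{(i,j,k)}}}(P(t_1), P(j)) = 0$ (since $t_1 < j$) combined with $\textrm{Hom}_{A_{R_{(i,j,k)}}}(P(j), P(j)) \cong k$ yields $\textrm{Ext}^{1}_{A_{R_{(i,j,k)}}}(L_{t_1}, P(j)) \cong k$, which is a nonzero direct summand of $\textrm{Ext}^{1}(I, A_{R_{(i,j,k)}}/I)$.

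For the ideal $I_{t_1,\ldots,t_l,y_1,\ldots,y_m}$, the first two groups of summands of $I$ are the same as above, but now the $y_c$-traces place $P(y_1)$ at position $j$, so $P(j)/P(y_1)$ is a direct summand of $A_{R_{(i,j,k)}}/I$, exactly as in the proof of Lemma \ref{main}. Since $L_{t_1}$ still appears (in all $i$ positions $\leq i$, because no $P(z_a)$ is present to absorb it), applying $\textrm{Hom}_{A_{R_{(i,j,k)}}}(-, P(j)/P(y_1))$ to the same resolution, together with the analogous vanishing $\textrm{Hom}_{A_{R_{(i,j,k)}}}(P(t_1), P(j)/P(y_1)) = 0$ and $\textrm{Hom}_{A_{R_{(i,j,k)}}}(P(j), P(j)/P(y_1)) \cong k$, produces $\textrm{Ext}^{1}_{A_{R_{(i,j,k)}}}(L_{t_1}, P(j)/P(y_1)) \cong k$ and hence $\textrm{Ext}^{1}(I, A_{R_{(i,j,k)}}/I) \neq 0$.

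The main obstacle is purely the bookkeeping of the trace decomposition: one has to verify that the absorptions among the summands $P(z_a)$, $L_{t_1}$, $P(t_b)$, and $P(y_c)$ proceed in exactly the manner recorded in (\ref{13}), so that $L_{t_1}$ genuinely survives as a direct summand of $I$ and the expected quotient module at position $j$ genuinely survives as a direct summand of $A_{R_{(i,j,k)}}/I$. Once this is checked, the two Hom computations are immediate because $t_1 \in [i+1, j-1]$ forces $S(t_1)$ to lie outside the composition factors of $P(j)$ and of $P(j)/P(y_1)$, and the conclusion then follows as in Lemma \ref{main}.
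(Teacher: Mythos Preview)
Your proposal is correct and takes essentially the same approach as the paper. The paper's own proof is a one-liner: it simply remarks that the two ideals in Lemma~\ref{last} arise from the computation~(\ref{*}) in Lemma~\ref{main} by setting either $\sum_{a=1}^{h} tr_{P(z_a)}(A_{R_{(i,j,k)}})=0$ or $\sum_{c=1}^{m} tr_{P(y_c)}(A_{R_{(i,j,k)}})=0$, leaving the rest of that argument intact; your write-up spells out explicitly the small adaptation this entails (namely that in the first case $P(j)$ rather than $P(j)/P(y_1)$ is the relevant summand of $A_{R_{(i,j,k)}}/I$), which the paper leaves implicit.
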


\begin{proof}
It is enough to consider in (\ref{*}) the trace
$\displaystyle \sum_{a=1}^{h}tr_{P(z_a)}(A_{R_{(i, j, k)}})=0$  or the trace  $\displaystyle \sum_{c=1}^{m}tr_{P(y_c)}(A_{R_{(i, j, k)}})=0$.
\end{proof}

\subsection{On the Number of Homological Ideals Associated to Some Nakayama Algebras}
The following results allow us to compute the number of homological and non homological ideals in a bounded algebra $A_{R_{(i, j, k)}}$ by using the integer specialization $e$ of the Brauer configuration $\Gamma_n$ introduced in Example \ref{integerspecialization}.

\addtocounter{teor}{10}
\begin{teor} \label{N.H}
For $n\geq 4$ fixed and $2\leq k \leq n-1$ the number $|\mathbb{NHI}_n^k|$ of non homological ideals of an algebra $A_{R_{(i, j, k)}}$ is given by the identity $|\mathbb{NHI}_n^k|=  w^e(U_k)$.

\end{teor}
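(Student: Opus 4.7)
The plan is to classify each idempotent ideal of $A_{R_{(i,j,k)}}$ as homological or non homological via direct appeal to Lemmas \ref{homological}--\ref{last}, and then enumerate the non homological class. Every idempotent ideal is of the form $I_{S}$ for a non empty subset $S\subseteq\{1,\dots,n\}$ (with the notation (\ref{HI1})), and distinct subsets yield distinct ideals for this family of Nakayama algebras because the supports of the individual traces $tr_{P(s)}(A_{R_{(i,j,k)}})$ inside $A_{R_{(i,j,k)}}$ are determined by $s$. Partition the vertex set as
\[
\{1,\dots,n\}=A\cup\{i\}\cup C\cup\{j\}\cup E,
\]
where $A=[1,i-1]$, $C=[i+1,j-1]$ and $E=[j+1,n]$, of cardinalities $i-1$, $k-1$ and $n-j$ respectively.

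The combinatorial claim I would prove is that $I_{S}$ is non homological if and only if $S\cap C\neq\emptyset$ while $i,j\notin S$. The implication $(\Leftarrow)$ splits into three cases: if $S\subseteq C$ apply Lemma \ref{center2}; if $S$ meets $C$ together with exactly one of $A$ or $E$ apply Lemma \ref{last}; if $S$ meets all three of $A,C,E$ apply Lemma \ref{main}. For the converse I would argue contrapositively. If $S\cap C=\emptyset$, then every singleton $I_{r}$ with $r\in S$ is homological by Lemma \ref{homological}, so $I_{S}$ is homological by Lemma \ref{17}. If instead $i\in S$ (the case $j\in S$ being symmetric), a direct computation using the decomposition (\ref{middle}) shows that $tr_{P(i)}(A_{R_{(i,j,k)}})=P(i)^{i}$ dominates, on every uniserial summand $P(w)$ of $A_{R_{(i,j,k)}}$, the non projective factors $L_{t_{b}}$ contributed by the vertices $t_{b}\in S\cap C$. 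Since submodules of a uniserial module form a chain, the resulting $I_{S}$ is a direct sum of indecomposable projectives and hence projective and idempotent, so it is homological by Proposition \ref{JAP}, item 2. This extends Lemma \ref{18} and its Remark to subsets that may also contain elements of $A$ or $E$.

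The enumeration is then immediate. A subset $S$ satisfying the criterion decomposes as an arbitrary subset of $A$, a non empty subset of $C$, and an arbitrary subset of $E$, so
\[
|\mathbb{NHI}_{n}^{k}|=2^{i-1}\cdot(2^{k-1}-1)\cdot 2^{n-j}=(2^{k-1}-1)\cdot 2^{n-k-1}=2^{n-2}-2^{n-k-1},
\]
where the second equality uses $j=i+k$. Comparing with the value $w^{e}(U_{k})=2^{n-2}-2^{n-k-1}$ obtained in Example \ref{integerspecialization} yields $|\mathbb{NHI}_{n}^{k}|=w^{e}(U_{k})$, as required.

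The main obstacle I anticipate is the projectivity absorption step in the converse direction, namely verifying that whenever $i$ or $j$ is included in $S$ the sum of traces stays projective on every uniserial summand $P(w)$ of $A_{R_{(i,j,k)}}$, since the explicitly stated lemmas cover only the shapes $I_{i,t_{1},\dots,t_{l}}$ and $I_{t_{1},\dots,t_{l},j}$ and not the fully general case with additional vertices from $A$ and $E$.
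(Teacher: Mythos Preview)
Your count and the organizing dichotomy ($I_S$ non-homological iff $S\cap C\neq\emptyset$ and $i,j\notin S$) are exactly what underlies the paper's argument, only made explicit; the paper's own proof simply multiplies the $2^{k-1}-1$ non-empty subsets of $C$ by the $2^{n-k-1}$ subsets of $A\cup E$, invokes Lemmas~\ref{center}, \ref{center2}, \ref{main} and the product rule, and does not spell out the converse direction at all.

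One step in your converse does not go through as written. The case $j\in S$ is \emph{not} symmetric to the case $i\in S$ via projectivity. When $i\in S$ your absorption argument is correct: $tr_{P(i)}(P(w))=P(i)$ for every $w\le i$, and this submodule contains every $L_{t}$. But when $j\in S$ while $i\notin S$, one has $tr_{P(j)}(P(w))=0$ for all $w\le i$, because the composition factors of $P(w)$ stop at $S(j-1)$ and $S(j)$ never occurs. Hence on the summand $P(i)$ the ideal $I_{S}$ still equals the non-projective module $L_{t_{1}}$, and $I_{S}$ is not projective. The conclusion that $I_{S}$ is homological is nevertheless correct, but it must be obtained as in the second case of Lemma~\ref{18}: once $j\in S$ the quotient $A/I_{S}$ has no composition factor $S(j)$ and no $S(t_{1})$, so applying $\mathrm{Hom}(-,A/I_{S})$ to $0\to P(j)\to P(t_{1})\to L_{t_{1}}\to 0$ yields $\mathrm{Ext}^{n}(L_{t_{1}},A/I_{S})=0$ for all $n$. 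Alternatively, pass to the opposite algebra, where the roles of $i$ and $j$ genuinely interchange and your projectivity argument then applies. This is precisely the obstacle you flagged in your last paragraph; the paper's brief proof is silent on it as well.
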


\begin{proof}
We note that according to Lemmas \ref{center} and \ref{center2} there are $2^{k-1}-1$ non homological ideals associated only to the vertices inside the relation $R_{(i, j, k)}$, by Lemma \ref{main} there are additional $2^{n-k-1}$ non homological ideals arising from the combination of vertices which are inside and outside of the relation. The theorem follows taking into account the product rule and Example \ref{integerspecialization}.
\end{proof}

\addtocounter{corol}{18}
\begin{corol}\label{H.I}
For $n\geq 4$ fixed and $2\leq k \leq n-1$ the number of homological ideals $|\mathbb{HI}_n^k|$ of an algebra $A_{R_{(i, j, k)}}$ is given by the identity $|\mathbb{HI}_n^k|=2^n-w^{e}(U_k)=3\cdot2^{n-2}+2^{n-k-1}$.

\end{corol}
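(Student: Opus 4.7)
The plan is to obtain the count of homological ideals by complementary counting: compute the total number of idempotent ideals of $A_{R_{(i,j,k)}}$ and subtract the count of non-homological ones supplied by Theorem~\ref{N.H}.

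First, I would recall the parametrization of idempotent ideals set up in equation~(\ref{HI1}): every idempotent ideal of $A_{R_{(i,j,k)}}$ has the form $I_{a_1,\dots,a_m}=tr_{P(a_1)\oplus\cdots\oplus P(a_m)}(A_{R_{(i,j,k)}})$ for some subset $\{a_1,\dots,a_m\}\subseteq Q_0=\{1,2,\dots,n\}$ (together with the zero ideal corresponding to the empty subset). This gives $2^n$ idempotent ideals indexed by the $2^n$ subsets of $Q_0$.

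Next, I would invoke Theorem~\ref{N.H} together with the explicit form of the specialized word $w^{e}(U_k)$ computed in Example~\ref{integerspecialization}, namely $w^{e}(U_k)=2^{n-2}-2^{n-k-1}$. Since the homological ideals are precisely the complement of the non-homological ones within the family of idempotent ideals, we have
\begin{equation*}
|\mathbb{HI}_n^k| \;=\; 2^n - |\mathbb{NHI}_n^k| \;=\; 2^n - w^{e}(U_k) \;=\; 2^n - 2^{n-2} + 2^{n-k-1}.
\end{equation*}
A brief simplification ($2^n - 2^{n-2} = 4\cdot 2^{n-2}-2^{n-2}=3\cdot 2^{n-2}$) then yields the claimed closed form $|\mathbb{HI}_n^k|=3\cdot 2^{n-2}+2^{n-k-1}$.

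The only conceptual point that requires care, rather than being an obstacle per se, is the justification that the $2^n$ subsets of $Q_0$ genuinely exhaust the idempotent ideals; this is precisely the Auslander--Platzeck--Todorov description recalled in the remark following Proposition~\ref{JAP}, combined with the notational convention~(\ref{HI1}). Once this is acknowledged, the corollary reduces to the arithmetic identity above, which is immediate from the formula for $w^{e}(U_k)$.
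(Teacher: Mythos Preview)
Your proposal is correct and follows exactly the paper's approach: the paper's own proof is the single sentence ``Since there are $2^n$ idempotent ideals in $A_{R_{(i,j,k)}}$ then the result holds as a consequence of Theorem~\ref{N.H}.'' You have simply made explicit the complementary count and the arithmetic $2^n-2^{n-2}=3\cdot 2^{n-2}$, and you flag the same tacit point (that the $2^n$ subsets of $Q_0$ parametrize the idempotent ideals) that the paper takes for granted.
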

\begin{proof}
Since there are $2^n$ idempotent ideals in $A_{R_{(i, j, k)}}$ then the result holds as a consequence of Theorem \ref{N.H}.
\end{proof}

The formula obtained in Theorem \ref{N.H} induces the following triangle:
\begin{center}
\textbf{Non homological triangle $\mathbb{NHIT}$}
\vspace{0.5cm}  

\begin{tabular}{|c|c|c|c|c|c|c|c|c|}
\hline
$n/k$ & $2$ & $3$ & $4$ & $5$ & $6$ & $7$ & $8$ &$\cdots$\\
\hline
$3$ & $1$ & - & - & - & - & - & - & -\\
\hline
$4$& $2$ & $3$  & - & - & - & - & - &-\\
\hline
$5$ & $4$ & $6$ & $7$  & - & - & - & - &-\\
\hline
$6$& $8$ & $12$ & $14$ & $15$ & - & - & - &-\\
\hline
$7$ & $16$ & $24$ & $28$ & $30$ & $31$ & - & - &-\\
\hline
$\vdots $& $\vdots$ & $\vdots $ & $\vdots $ & $\vdots $ & $\vdots $ & $\vdots $ & $\vdots$& $\vdots$\\
\hline
\end{tabular}

\end{center}

Entries $|\mathbb{NHI}_n^k|$ of triangle $\mathbb{NHIT}$ can be calculated inductively as follows: we start by defining $|\mathbb{NHI}_n^2|=2^{n-3}$ for all $n\geq 3$. Now, we assume that $|\mathbb{NHI}_n^k|=0$ with $k\leq 1$ and for the sake of clarity we denote the specialization under $e$ of a word $w(U_k)$ of the polygon $U_k$ in the Brauer configuration $\Gamma_n$ as $w^{e}(U_k^n)$ (see Example \ref{integerspecialization}). Then, for $k\geq 3$:

\[w^{e}(U_k)=w^{e}(U_k^n)=(w^{e}(U_{k-1}^n)+ w^{e}(U_{k-1}^{n-1}))-w^{e}(U_{k-2}^{n-1}).\]  

or equivalently,
\[|\mathbb{NHI}_n^k|=(|\mathbb{NHI}_n^{k-1}|+ |\mathbb{NHI}_{n-1}^{k-1}|)-|\mathbb{NHI}_{n-1}^{k-2}|.\]  These arguments prove the following proposition.

\addtocounter{prop}{13}
\begin{prop}
$M(\Gamma_n^e)$ equals the sum of the elements in the $n$-th row of the non homological triangle $\mathbb{NHIT}$ (see Example $\mathrm(\ref{integerspecialization})$).
\end{prop}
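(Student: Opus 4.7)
The plan is to derive the proposition directly from Theorem \ref{N.H} together with the definition of the message in equation \eqref{message}, so the argument is mostly bookkeeping. First I would note that, by construction, the $n$-th row of $\mathbb{NHIT}$ consists of the entries $|\mathbb{NHI}_n^k|$ for $k$ ranging over $2 \leq k \leq n-1$, one entry for each polygon $U_k = \{n-2,n-k-1\}$ of the Brauer configuration $\Gamma_n$. This indexing is crucial: the range of $k$ matches exactly the indexing of $\Gamma_1$ in the definition of $\Gamma_n$ given before Example \ref{integerspecialization}.

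Next I would invoke Theorem \ref{N.H}, which gives the pointwise identification
\[
|\mathbb{NHI}_n^k| = w^e(U_k) \qquad \text{for each } 2 \leq k \leq n-1.
\]
Summing this identity over all values of $k$ indexing the $n$-th row yields
\[
\sum_{k=2}^{n-1} |\mathbb{NHI}_n^k| = \sum_{k=2}^{n-1} w^e(U_k).
\]
Finally, the right-hand side is, by the definition in \eqref{message} applied to the specialization $\Gamma_n^e$, precisely $M(\Gamma_n^e) = \sum_{U \in \Gamma_1^e} w^e(U)$, since the polygons of $\Gamma_n^e$ are indexed by the same set $\{U_k : 2 \leq k \leq n-1\}$ (the specialization $e$ is orientation preserving and bijective on $\Gamma_1$). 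Chaining the two equalities concludes the proof.

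There is no genuine obstacle here; the proposition is essentially the statement that summing the pointwise equality of Theorem \ref{N.H} reproduces the formal sum that defines the specialized message. The only thing that requires a brief verification is that the indexing set for the $n$-th row coincides with the polygon set of $\Gamma_n$, which is immediate from the definition of $\Gamma_n$ and of $\mathbb{NHIT}$. As a sanity check, one may verify on the row $n=5$ that $|\mathbb{NHI}_5^2|+|\mathbb{NHI}_5^3|+|\mathbb{NHI}_5^4| = 4+6+7 = 17$ agrees with $\sum_{k=2}^{4}(2^{3}-2^{4-k}) = M(\Gamma_5^e)$, in line with Example \ref{integerspecialization}.
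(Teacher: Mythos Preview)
Your proof is correct and follows essentially the same route as the paper: the proposition is obtained by summing the pointwise identity $|\mathbb{NHI}_n^k| = w^e(U_k)$ from Theorem \ref{N.H} over $2\leq k\leq n-1$ and recognizing the result as the specialized message $M(\Gamma_n^e)$. The paper frames its argument via the inductive recurrence for the triangle entries, but this is ancillary; the content of the proof is exactly the bookkeeping you carry out.
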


\addtocounter{Nota}{5}
\begin{Nota}
The integer sequence generated by $M(\Gamma_n^e)=\displaystyle\sum_{k=1}^{n-1} 2^{n-2}-2^{n-k-1}=\{1,5,17,49,129,321,769,1793,4097,9217, \ldots\}$ is encoded A000337 in the OEIS. Elements of the sequence A000337 also correspond to the sums of the elements of the rows of the Reinhard Zumkeller triangle.
\end{Nota}
\begin{Nota}\label{properties}
The sum of  entries in the diagonals of the non homological triangle is the sequence A274868 in the OEIS, and it is related with the number of set partitions of $[n]$ into exactly four blocks such that all odd elements are in blocks with an odd index, whereas all even elements are in blocks with an even index.
\end{Nota}

Similarly, for the homological ideals Corollary \ref{H.I} induces the following triangle:

\begin{center}
\textbf{ Homological triangle $\mathbb{HIT}$.}
\vspace{0.5cm}  

\begin{tabular}{|c|c|c|c|c|c|c|c|c|}
\hline
$n/k$ & $2$ & $3$ & $4$ & $5$ & $6$ & $7$ & $8$ &$\cdots$\\
\hline
$3$ & $7$ & - & - & - & - & - & - & -\\
\hline
$4$& $14$ & $13$  & - & - & - & - & - &-\\
\hline
$5$ & $28$ & $26$ & $25$  & - & - & - & - &-\\
\hline
$6$& $56$ & $52$ & $50$ & $49$ & - & - & - &-\\
\hline
$7$ & $112$ & $104$ & $100$ & $98$ & $97$ & - & - &-\\
\hline
$\vdots $& $\vdots$ & $\vdots $ & $\vdots $ & $\vdots $ & $\vdots $ & $\vdots $ & $\vdots$& $\vdots$\\

\hline
\end{tabular}
\vspace{0.5cm}

\end{center}

The elements of the homological triangle are closely related with the research of categorification of integer sequences. Particularly, these numbers deal with the work of Ringel and Fahr regarding categorification of Fibonacci numbers. In the next section \ref{categorification}, we reconstruct the partition formula for even-index Fibonacci numbers given in \cite{Fahr1,Fahr3} by using the number of homological ideals of some Nakayama algebras.

\subsection{Categorification of Integer Sequences}\label{categorification}
In this section, we give some relationships between the number of homological ideals of an algebra $A_{R_{(i, j, k)}}$ and the partition formula given by Ringel and Fahr for even-index Fibonacci numbers in \cite{Fahr1}.\par\bigskip
According to Ringel and Fahr \cite{Fahr2} a categorification of a sequence of
numbers means to consider instead of these numbers suitable objects in a category (for instance, representation of quivers) so that the numbers in question occur as invariants of the objects, equality of numbers may be visualized by isomorphisms of objects functional relations by functorial ties.  The notion of this kind of categorification arose from  the use of suitable arrays of numbers to obtain integer partitions of dimensions of indecomposable preprojective modules over the 3-Kronecker algebra (see Figure 3 where it is shown the 3-Kronecker quiver and a piece of the oriented 3-regular tree or universal covering $(T,E,\Omega_{t})$ as described by Ringel and Fahr in \cite{Fahr1}). Firstly they noted that  the vector dimension of these kind of modules consists of even-index Fibonacci numbers (denoted $f_{i}$ and such that $f_{i}=f_{i-1}+f_{i-2}$, for $i\geq2$, $f_{0}=0$, $f_1=1$) then they used results from the universal covering theory developed by Gabriel and his students to identify such Fibonacci numbers with dimensions of representations of the corresponding universal covering.

\begin{figure}[H]
\begin{center}
\begin{tikzpicture}[scale=0.42]
%\draw[step=0.5cm,gray,very thin] (0,0) grid (14,8);
\node["1"] at (2,4)(a){$\circ$};
\node["2"] at (5,4)(b){$\circ$};
\node[inner sep=0pt] at (9,6.5)(9){$\bullet$};
\node[inner sep=0pt] at (12,6.5)(10){$\bullet$};
\node[inner sep=0pt] at (8,5.5)(11){$\bullet$};
\node[inner sep=0pt] at (13,5.5)(12){$\bullet$};
\node[inner sep=0pt] at (10,5.5)(3){$\bullet$};
\node[inner sep=0pt] at (11,5.5)(4){$\bullet$};
\node[inner sep=0pt] at (9,4.5)(1){$\bullet$};
\node[inner sep=0pt] at (12,4.5)(2){$\bullet$};

\node[inner sep=0pt] at (9,3.5)(5){$\bullet$};

\node[inner sep=0pt] at (12,3.5)(6){$\bullet$};

\node[inner sep=0pt] at (10,2.5)(7){$\bullet$};

\node[inner sep=0pt] at (11,2.5)(8){$\bullet$};

\node[inner sep=0pt] at (8,2.5)(13){$\bullet$};

\node[inner sep=0pt] at (13,2.5)(14){$\bullet$};

\node[inner sep=0pt] at (9,1.5)(15){$\bullet$};

\node[inner sep=0pt] at (12,1.5)(16){$\bullet$};

%______________________

\path[->] (b) edge (a);

\path[->](b) edge  [bend left](a);

\path[->](b) edge  [bend right](a);

\path[->] (3) edge (1);

\path[->] (3) edge (4);

\path[->] (3) edge (9);

\path[->] (11) edge (1);

\path[->] (10) edge (4);

\path[->] (2) edge (4);

\path[->] (2) edge (12);

\path[->] (5) edge (1);

\path[->] (2) edge (6);

\path[->] (5) edge (13);

\path[->] (5) edge (7);

\path[->] (14) edge (6);

\path[->] (8) edge (7);
\path[->] (15) edge (7);
\path[->] (8) edge (16);
\path[->] (8) edge (6);
\end{tikzpicture}
\label{covering}
\caption{The 3-Kronecker quiver and an illustration of its corresponding universal covering.}
\end{center}
\end{figure}
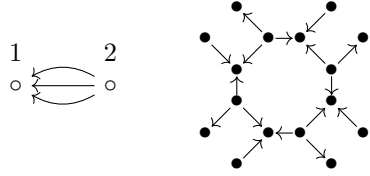
First of all note that the road to a categorification of the Fibonacci numbers has several stops some of them dealing with diagonal (lower) arrays of numbers of the form $D=(d_{i, j})$ with $0\leq j\leq i\leq n$, (columns numbered from right to the left, see Figure 4) for some $n\geq0$ fixed and such that:
\par\bigskip
\addtocounter{equation}{1}
\begin{equation}\label{farray}
\begin{split}
d_{i,i}&=1, \quad\text{for all}\hspace{0.1cm}i\geq0,\\
d_{i,j}&=0 \quad\text{for all}\hspace{0.1cm}j>i\\
d_{2k+i, i-1}&=0, \quad\text{if}\hspace{0.1cm}i\geq 1, \hspace{0.1cm}k\geq 0,\\
d_{2k, 0}&=3d_{2k-1, 1}-d_{2(k-1), 0},\quad k\geq 1,\\
d_{i+1, j-1}&=2d_{i, j}+d_{i, j-2}-d_{i-1, j-1},\quad i, j\geq2.\\
\end{split}
\end{equation}
Besides, if $i \geq 4$ then the following identity (hook rule) holds;

\begin{equation}
\begin{split}
\underset{k=0}{\overset{i-2}{\sum}}d_{i+k, i-k}+d_{2i-2, 0}&=d_{2i-1, 1}.
\end{split}
\end{equation}

Note that to each entry $d_{i, i-j}$ it is possible to assign a weight $w_{i, i- j}$ by using the numbers in the homological triangle $\mathbb{HIT}$ as follows:

\[w_{i, i-j}=
\begin{cases}
|\mathbb{HI}_{2s+2}^k|- 2^{2\cdot s-k+1},  & \hspace{0.2cm}\text{if}\hspace{0.2cm}j\hspace{0.2cm}\text{is even}, i\hspace{0.2cm}\text{is odd}\hspace{0.2cm} \text{and }i\neq j+1, \\
|\mathbb{HI}_{2s+1}^k|- 2^{2\cdot s-k},  & \hspace{0.2cm}\text{if}\hspace{0.2cm}j\hspace{0.2cm}\text{is even}, i\hspace{0.2cm}\text{is even}, \\
3,&\hspace{0.2cm}\text{if}\hspace{0.2cm}i\hspace{0.2cm}\text{odd}, j\hspace{0.2cm}\text{even}   \hspace{0.1cm} \text{and } i=j+1,\\
1,&\hspace{0.2cm}\text{if}\hspace{0.2cm}i=j=2h\hspace{0.2cm}\text{for some}\hspace{0.1cm}h\geq0,\\
0,            &\hspace{0.2cm}\text{if}\hspace{0.2cm}j\hspace{0.2cm}\text{is odd},\hspace{0.2cm}i\neq j.\\
\end{cases}\]

Where $s=\lfloor\frac{i-j}{2}\rfloor$  and $\lfloor x\rfloor$ is the greatest integer number less than $x$. If we consider the multiplication of the entry $d_{i, i-j}$ with its corresponding weight $w_{i, i- j}$ we can define a partition formula for even-index Fibonacci numbers in the following form:
\begin{equation}\label{Fibonacci}
\begin{split}
f_{2i+2}&=\underset{j=0}{\overset{i}{\sum}}(w_{i,i-j})(d_{i, i-j}),
\end{split}
\end{equation}

Finally, we recall that Ringel and Fahr interpreted weights $w_{i, i-j}$ as distances in a 3-regular tree $(T,E)$ (with $T$ a vertex set and $E$ a set of edges) from a fixed point $x_{0}\in T$ to any point $y\in T$. They define sets $T_{r}$ whose points have distance $r$ to $x_{0}$ , in such a case $T_{0}=\{x_{0}\}$, $T_{1}$ are the neighbors of $x_{0}$ and so on (note that $|T_{r}|=3(2^{r-1})$ if $r\geq1$). A given vertex $y$ is said to be even or odd according to this parity \cite{Fahr1}.\par\bigskip
Any vertex $y\in T$ yields a suitable reflection $\sigma_{y}$ on
the set of functions $T\rightarrow \mathbb{Z}$ with finite support, denoted $\mathbb{Z}[T]$, and some reflection products denoted
$\Phi_{0}$ and $\Phi_{1}$ according to the parity of $y$ are introduced in \cite{Fahr1}. Then some maps
$a_{t}:\mathbb{N}_{0}\rightarrow \mathbb{Z}\in\mathbb{Z}[T]$ are defined in such a way that if $a_{0}$ is
the characteristic function of $T_{0}$ then $a_{0}(x)=0$ unless $x=x_{0}$ in which case $a_{0}(x_{0})=1$,\quad and\quad $a_{t}=(\Phi_{0}\Phi_{1})^{t}a_{0}$, for $t\geq0$, with $a_{t}[r]=a_{t}(x)$, for $r\in\mathbb{N}_{0}$ and $x\in T_{r}$, these maps $a_{t}$ give the values $d_{i, j}$ of the array (see Figure 4). The following table is an example of such array with $n=7$. Rows are giving by the values of $t$, $P_{t}$ is a notation for a 3-Kronecker preprojective module with dimension vector $[f_{2t+2}\hspace{0.2cm}f_{2t}]$  (see \cite{Fahr3}). \par\bigskip

According to the present discussion the identity (\ref{Fibonacci}) adopts one of the following forms defined by Ringel and Fahr in \cite{Fahr1}:

 \begin{equation}\label{Fibonacci2}
\begin{split}
f_{4t}&=\underset{r\hspace{0.1cm} odd}{{\sum}}|T_{r}|\cdot a_{t}[r]=3\underset{m\geq1}{{\sum}}2^{2m}\cdot a_{t}[2m+1],\\
f_{4t+2}&=\underset{r\hspace{0.1cm} even}{{\sum}}|T_{r}|\cdot a_{t}[r]=a_{t}[0]+3\underset{m\geq1}{{\sum}}2^{2m-1}\cdot a_{t}[2m].
\end{split}
\end{equation}

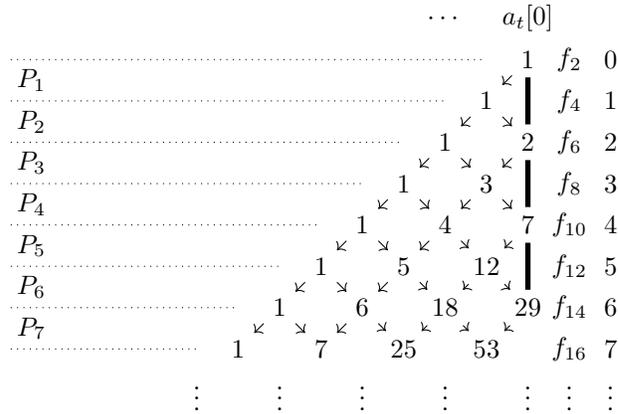
\begin{figure}[H]
\begin{center}
\begin{tikzpicture}[scale=0.55]

%%%%%%0

\node at (10,15)(a){$0$};
\node at (10,14)(a){$1$};
\node at (10,13)(a){$2$};
\node at (10,12)(a){$3$};
\node at (10,11)(a){$4$};
\node at (10,10)(a){$5$};
\node at (10,9)(a){$6$};
\node at (10,8)(a){$7$};
%\node at (10,7)(a){$8$};
%\node at (10,6)(a){$9$};
%\node at (10,5)(a){$10$};
%\node at (10,4)(a){$11$};
%\node at (10,3)(a){$12$};
\node at (10,7)(a){$\vdots$};
%\node at (10,6)(a){$t$};
%%%%%% 0
\node at (9,15)(a){$f_2$};
\node at (9,14)(a){$f_4$};
\node at (9,13)(a){$f_6$};
\node at (9,12)(a){$f_8$};
\node at (9,11)(a){$f_{10}$};
\node at (9,10)(a){$f_{12}$};
\node at (9,9)(a){$f_{14}$};
\node at (9,8)(a){$f_{16}$};
%\node at (9,7)(a){$f_{18}$};
% \node at (9,6)(a){$f_{20}$};
% \node at (9,5)(a){$f_{22}$};
% \node at (9,4)(a){$f_{24}$};
% \node at (9,3)(a){$f_{26}$};
\node at (9,7)(a){$\vdots$};
%\node at (9,6)(a){$f_{2t+2}$};

\node at (8,7)(a){$\vdots$};
\node at (6,7)(a){$\vdots$};
\node at (4,7)(a){$\vdots$};
\node at (2,7)(a){$\vdots$};
\node at (0,7)(a){$\vdots$};
%%%%%%%%%%%%%% Orbit 1
\node at (8,15)(1){$1$};
\node at (8,13)(2){$2$};
\node at (8,11)(3){$7$};
\node at (8,9)(4){$29$};
%\node at (8,7)(5){$130$};
%\node at (8,5)(6){$611$};
%\node at (8,3)(7){$2965$};
%%%%%%%%% Orbit 2 Rigth to Left
\node at (7,14)(8){$1$};
\node at (7,12)(9){$3$};
\node at (7,10)(10){$12$};
\node at (7,8)(11){$53$};
% \node at (7,6)(12){$247$};
% \node at (7,4)(13){$1192$};
%%%%%%%%% Orbit 3 Rigth to Left
\node at (6,13)(14){$1$};
\node at (6,11)(15){$4$};
\node at (6,9)(16){$18$};
%\node at (6,7)(17){$85$};
% \node at (6,5)(18){$414$};
% \node at (6,3)(19){$2062$};

%%%%%%%%% Orbit 4 Rigth to Left
\node at (5,12)(20){$1$};
\node at (5,10)(21){$5$};
\node at (5,8)(22){$25$};
%\node at (5,6)(23){$126$};
%\node at (5,4)(24){$642$};

%%%%%%%%% Orbit 5 Rigth to Left
\node at (4,11)(25){$1$};
\node at (4,9)(26){$6$};
% \node at (5,9){\tiny{(2(6)+18)-5}};
%\node at (4,7)(27){$33$};
%\node at (4,5)(28){$177$};
%\node at (4,3)(29){$943$};

%%%%%%%%% Orbit 6 Rigth to Left
\node at (3,10)(30){$1$};
\node at (3,8)(31){$7$};
% \node at (3,6)(32){$42$};
% \node at (3,4)(33){$239$};

%%%%%%%%% Orbit 7 Rigth to Left
\node at (2,9)(34){$1$};
%\node at (2,7)(35){$8$};
% \node at (2,5)(36){$52$};
% \node at (2,3)(37){$313$};

%%%%%%%%% Orbit 8 Rigth to Left
\node at (1,8)(38){$1$};
%   \node at (1,6)(39){$9$};
% \node at (1,4)(40){$63$};

%%%%%%%%% Orbit 9 Rigth to Left
%\node at (0,7)(41){$1$};
%\node at (0,5)(42){$10$};
%\node at (0,3)(43){$75$};
%%%%%%%%% Orbit 10 Rigth to Left
%\node at (-1,6)(44){$1$};
%\node at (-1,4)(45){$11$};
%%%%%%%%% Orbit 11 Rigth to Left
%\node at (-2,5)(46){$1$};
%\node at (-2,3)(47){$12$};
%%%%%%%%% Orbit 12 Rigth to Left
%\node at (-3,4)(48){$1$};
%%%%%%%%% Orbit 13 Rigth to Left
%\node at (-4,3)(49){$1$};

%%%%%% Last part II
%\node at (-5,6)(50){$$};
%\node at (-3,2)(51){$$};
% \node at (0,7)(52){$$};
% \node at (2,7)(53){$$};
% \node at (4,7)(54){$$};
% \node at (6,7)(55){$$};
% \node at (8,7)(56){$$};

%%%%%% Last part
%\node at (-6,6)(57){};
%\node at (-4,6)(58){};
% \node at (-1,6)(59){};
% \node at (1,6)(60){};
% \node at (3,6)(61){};
% \node at (5,6)(62){};
% \node at (7,6)(63){};

%%%% P(i)

%%%%%% 0
\node at (-4,14.5)(a){$P_1$};
\node at (-4,13.5)(a){$P_2$};
\node at (-4,12.5)(a){$P_3$};
\node at (-4,11.5)(a){$P_4$};
\node at (-4,10.5)(a){$P_5$};
\node at (-4,9.5)(a){$P_6$};
\node at (-4,8.5)(a){$P_7$};
%\node at (-4,7.5)(a){$P_8$};
%\node at (-4,6.5)(a){$P_9$};
% \node at (-4,5.5)(a){$P_{10}$};

%____________________________________
\path[->] (1) edge (8);
\path[->] (8) edge (2);
\path[->] (2) edge (9);
\path[->] (9) edge (3);
\path[->] (3) edge (10);
\path[->] (10) edge (4);
\path[->] (4) edge (11);
%\path[->] (11) edge (5);
% \path[->] (5) edge (12);
% \path[->] (12) edge (6);
% \path[->] (6) edge (13);
% \path[->] (13) edge (7);

%______________________
\path[->] (8) edge (14);
\path[->] (14) edge (9);
\path[->] (9) edge (15);
\path[->] (15) edge (10);
\path[->] (10) edge (16);
\path[->] (16) edge (11);
%\path[->] (11) edge (17);
% \path[->] (17) edge (12);
% \path[->] (12) edge (18);
% \path[->] (18) edge (13);
% \path[->] (13) edge (19);

%______________________
\path[->] (14) edge (20);
\path[->] (20) edge (15);
\path[->] (15) edge (21);
\path[->] (21) edge (16);
\path[->] (16) edge (22);
%\path[->] (22) edge (17);
% \path[->] (17) edge (23);
% \path[->] (23) edge (18);
% \path[->] (18) edge (24);
% \path[->] (24) edge (19);

%______________________
\path[->] (20) edge (25);
\path[->] (25) edge (21);
\path[->] (21) edge (26);
\path[->] (26) edge (22);
%\path[->] (22) edge (27);
% \path[->] (27) edge (23);
% \path[->] (23) edge (28);
% \path[->] (28) edge (24);
% \path[->] (24) edge (29);

%______________________
\path[->] (25) edge (30);
\path[->] (30) edge (26);
\path[->] (26) edge (31);
%\path[->] (31) edge (27);
% \path[->] (27) edge (32);
% \path[->] (32) edge (28);
% \path[->] (28) edge (33);
% \path[->] (33) edge (29);

%______________________
\path[->] (30) edge (34);
\path[->] (34) edge (31);
%\path[->] (31) edge (35);
% \path[->] (35) edge (32);
% \path[->] (32) edge (36);
% \path[->] (36) edge (33);
% \path[->] (33) edge (37);

%______________________
\path[->] (34) edge (38);
%\path[->] (38) edge (35);
% \path[->] (35) edge (39);
% \path[->] (39) edge (36);
% \path[->] (36) edge (40);
% \path[->] (40) edge (37);

%______________________
%\path[->] (38) edge (41);
% \path[->] (41) edge (39);
% \path[->] (39) edge (42);
% \path[->] (42) edge (40);
% \path[->] (40) edge (43);

%______________________
% \path[->] (41) edge (44);
% \path[->] (44) edge (42);
% \path[->] (42) edge (45);
% \path[->] (45) edge (43);

%______________________
% \path[->] (44) edge (46);
% \path[->] (46) edge (45);
% \path[->] (45) edge (47);

%______________________
% \path[->] (46) edge (48);
% \path[->] (48) edge (47);

%______________________
% \path[->] (48) edge (49);

%______________________
\draw[line width=2pt] (1) -- (2);
\draw[line width=2pt] (2) -- (3);
\draw[line width=2pt] (3) -- (4);
%\draw[line width=2pt] (4) -- (5);
% \draw[line width=2pt] (5) -- (6);
% \draw[line width=2pt] (6) -- (7);

%________________ Last part

% \path[->] (50) edge (57);
% \path[->] (51) edge (58);
% \path[->] (52) edge (59);
% \path[->] (53) edge (60);
% \path[->] (54) edge (61);
% \path[->] (55) edge (62);
% \path[->] (56) edge (63);

%________________

% \path[-] (7) edge (56);
% \path[-] (19) edge (56);
% \path[-] (19) edge (55);
% \path[-] (29) edge (54);
% \path[-] (29) edge (55);
% \path[-] (37) edge (53);
% \path[-] (37) edge (54);
% \path[-] (43) edge (52);
% \path[-] (43) edge (53);
% \path[-] (47) edge (51);
% \path[-] (47) edge (52);
% \path[-] (49) edge (50);
% \path[-] (49) edge (51);

\draw[dotted] (-4.5,15) -- (7,15);
\draw[dotted] (-4.5,14) -- (6,14);
\draw[dotted] (-4.5,13) -- (5,13);
\draw[dotted] (-4.5,12) -- (4,12);
\draw[dotted] (-4.5,11) -- (3,11);
\draw[dotted] (-4.5,10) -- (2,10);
\draw[dotted] (-4.5,9) -- (1,9);
\draw[dotted] (-4.5,8) -- (0,8);
%\draw[dotted] (-4.5,7) -- (-1,7);
% \draw[dotted] (-4.5,6) -- (-2,6);
% \draw[dotted] (-4.5,5) -- (-3,5);

\node at (8,16)(a){$a_t[0]$};

\node at (6,16)(a){$\cdots$};

\end{tikzpicture}
\label{triangle1}
\caption{The even-index Fibonacci partition triangle \cite{Fahr3}.}
\end{center}

\end{figure}

For example for $t=3$ and $t=4$, we compute $f_{8}$ and $f_{10}$ as follows;
\begin{equation}
\begin{split}
21&=f_{8}=0+3(3\cdot 2^0)+0+1(3\cdot 2^2),\\
55&=f_{10}=1\cdot 7+0+4(3\cdot 2^1)+0+1(3\cdot 2^{3}).
\end{split}
\label{3}
\end{equation}

sequences $a_{t}[0]=d_{2i,0}$ and $a_{t}[1]=d_{2i+1,1}$ are encoded respectively as A132262 and A110122 in the OEIS. Actually, sequence $a_{t}[0]$ had not been registered in the OEIS before the publication of Ringel and Fahr.\par\bigskip

The following result giving a relationship between  the number of homological ideals and Fibonacci numbers is a direct consequence of identities (\ref{Fibonacci}) and (\ref{Fibonacci2}).

\addtocounter{teor}{4}

\begin{teor}
\begin{equation}
\begin{split}
\underset{j=0}{\overset{2t}{\sum}}(w_{2t,2t-j})(d_{2t, 2t-j})=\underset{r even}{{\sum}}|T_{r}|\cdot a_{t}[r],\quad t\geq0\\
\underset{j=0}{\overset{2t-1}{\sum}}(w_{2t-1,2t-1-j})(d_{2t-1, 2t-1-j})=\underset{r odd}{{\sum}}|T_{r}|\cdot a_{t}[r], \quad t\geq1.
\end{split}
\end{equation}

\end{teor}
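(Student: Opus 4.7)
The plan is to recognize that both sides of each identity compute the same Fibonacci number, so the theorem reduces to composing the two pre-established identities $(\ref{Fibonacci})$ and $(\ref{Fibonacci2})$ via a common midpoint. I will treat the even-index and odd-index cases separately, using the appropriate specialization of $i$ in $(\ref{Fibonacci})$ so that the resulting Fibonacci number matches exactly one of the two formulas appearing in $(\ref{Fibonacci2})$.

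For the first identity (with $t\geq 0$) I would specialize $i=2t$ in the partition formula $(\ref{Fibonacci})$. This gives
\[
\sum_{j=0}^{2t}(w_{2t,2t-j})(d_{2t, 2t-j})=f_{2(2t)+2}=f_{4t+2}.
\]
Then I would apply the second line of $(\ref{Fibonacci2})$, which asserts $f_{4t+2}=\sum_{r\text{ even}}|T_{r}|\cdot a_{t}[r]$. Chaining these two equalities yields the first claim.

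For the second identity (with $t\geq 1$) the same scheme works after specializing $i=2t-1$ in $(\ref{Fibonacci})$:
\[
\sum_{j=0}^{2t-1}(w_{2t-1,2t-1-j})(d_{2t-1, 2t-1-j})=f_{2(2t-1)+2}=f_{4t},
\]
and then I would invoke the first line of $(\ref{Fibonacci2})$, which states $f_{4t}=\sum_{r\text{ odd}}|T_{r}|\cdot a_{t}[r]$. Combining the two equalities gives the second claim and completes the proof.

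Since the computational substance is already absorbed by the definition of the weights $w_{i,i-j}$ (which were engineered in terms of $|\mathbb{HI}_{n}^{k}|$ precisely so that $(\ref{Fibonacci})$ recovers the even-index Fibonacci numbers) and by Ringel and Fahr's reflection-product interpretation of $a_{t}[r]$ (which underlies $(\ref{Fibonacci2})$), there is no remaining combinatorial obstacle; the only point that has to be checked carefully is the bookkeeping of parities, namely that $i=2t$ corresponds to $f_{4t+2}$ and hence to even distances $r$ in $T$, whereas $i=2t-1$ corresponds to $f_{4t}$ and hence to odd distances $r$. This parity matching is the sole step where an error could plausibly creep in, and verifying it is the main (if minor) obstacle of the proof.
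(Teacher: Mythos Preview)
Your proposal is correct and matches the paper's approach exactly: the paper states the theorem as ``a direct consequence of identities (\ref{Fibonacci}) and (\ref{Fibonacci2})'' without further elaboration, and your argument simply spells out the obvious specializations $i=2t$ and $i=2t-1$ together with the parity bookkeeping that makes this consequence explicit.
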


%_____________________
\begin{bibdiv}
\begin{biblist}

\bib{Armenta}{book}{title={Homological Ideals of Finite Dimensional Algebras}, Author={M. Armenta}, Publisher={CIMAT}, Address={Mexico}, Date={2016}, note={Master Thesis}}

\bib{Auslander}{article}{title={Homological Theory of Idempotent Ideals}, Author={M. Auslander}, Author={M. I. Platzeck},Author={G. Todorov},journal={Transactions of the American Mathematical Society},volume={332}, date={1992}, number={2}, pages ={667-692} }

\bib{Fahr1}{article}{title={A partition formula for Fibonacci numbers}, subtitle={}, Author={P. Fahr},
Author={}, Author={C. M. Ringel}, Author={}, journal={J. Integer Seq.}, volume={11}, date={2008}, number={08.14}, pages
={}

}

\bib{Fahr2}{article}{title={Categorification of the Fibonacci numbers using representations of quivers}, subtitle={}, Author={P. Fahr},
Author={}, Author={C. M. Ringel}, Author={}, journal={J. Integer Seq.},
volume={15}, date={2012}, number={12.2.1}, pages ={}

}

\bib{Fahr3}{article}{title={The Fibonacci triangles}, subtitle={}, Author={P. Fahr},
Author={}, Author={C. M. Ringel}, Author={}, journal={Advances in Mathematics.},
volume={230}, date={2012}, number={}, pages ={2513--2535}

}

\bib{Gatica}{article}{title={Idempotent Ideals and the Igusa-Todorov Functions}, subtitle={}, Author={M. Lanzilotta},
Author={M.A. Gatica}, Author={M.I. Platzeck}, Author={}, journal={Algebr Represent Theory},
volume={20}, date={2017}, number={}, pages ={275-287}

}

\bib{Green}{article}{title={Brauer configuration algebras: A generalization of
Brauer graph algebras}, subtitle={}, Author={E.L. Green},
Author={}, Author={S. Schroll}, Author={}, journal={Bull. Sci. Math.},
volume={141}, date={2017}, number={}, pages ={539--572}}

\bib{Happel}{book}{title={Algebras of finite global dimension}, Author={D. Happel}, Author={D. Zacharia}, Publisher={Springer}, Series={8}, Address={Heidelberg}, Date={2013}, note={In: Algebras, quivers and representations, Abel Symp.}}

\bib{JAP}{article}{title={Hochschild Cohomology of Algebras with Homological Ideals}, subtitle={}, Author={ J. A. De la Pe\~na},
Author={Changchang Xi}, Author={}, Author={}, journal={Tsukuba J. Math.},
volume={30}, date={2006}, number={1}, pages ={61-79}

}

\bib{Sierra}{article}{title={The dimension of the center of a Brauer configuration algebra}, subtitle={}, Author={A. Sierra},
Author={}, Author={}, Author={}, journal={J. Algebra},
volume={510}, date={2018}, number={}, pages ={289-318}

}

\bib{OEIS}{book}{title={ The On-Line Encyclopedia of Integer Sequences}, Author={N.J.A. Sloane}, Publisher={The OEIS Foundation}, Address={Available at https://oeis.org}, Date={}}

\end{biblist}
\end{bibdiv}

\par\bigskip

{Pedro Fernando Fern\'andez Espinosa\hspace{2.0cm}Agust\'{\i}n Moreno Ca\~{n}adas\\
pffernandeze@unal.edu.co\hspace{3.55cm}amorenoca@unal.edu.co\\
Department of Mathematics\hspace{3.15cm}Department of Mathematics\\
Universidad Nacional de Colombia\hspace{2.2cm}Universidad Nacional de Colombia\\
Bogot\'a-Colombia\hspace{4.65cm}Bogot\'a-Colombia}\par\bigskip

\end{document}